\begin{document}

\title*{A parallel algorithm for the constrained shortest path problem on lattice graphs}
% Use \titlerunning{Short Title} for an abbreviated version of
% your contribution title if the original one is too long
\author{Ivan Matic\\
Published in: Adamatzky, A (Ed.) Shortest path solvers. From software to wetware. Springer, 2018.}
% Use \authorrunning{Short Title} for an abbreviated version of
% your contribution title if the original one is too long
\authorrunning{Ivan Matic}

\institute{Ivan Matic \at Department of Mathematics, Baruch College, CUNY, One Bernard Baruch Way, New York, NY 10010, USA \email{Ivan.Matic@baruch.cuny.edu}
}
%
% Use the package "url.sty" to avoid
% problems with special characters
% used in your e-mail or web address
%
%\author{Published in: Adamatzky, A (Ed.) Shortest path solvers. From software to wetware. Springer, 2018.}
\maketitle

\abstract{The edges of a graph are assigned weights and passage times which are assumed to be positive integers. We present a parallel algorithm for finding the shortest path whose total weight is smaller than a pre-determined value.  In each step the processing elements are not analyzing the entire graph. Instead they are focusing on a subset of vertices called {\em active vertices}. The set of active vertices at time $t$ is related to the boundary of the ball $B_t$ of radius $t$ in the first passage percolation metric.  Although it is believed that the number of active vertices is an order of magnitude smaller than the size of the graph, we prove that this need not be the case with an example of a graph for which the active vertices form a large fractal. We analyze an OpenCL implementation of the algorithm on GPU for cubes in $\mathbb Z^d$.
}

\section{Definition of the problem}
\label{sec:1}
The graph $G(V,E)$ is undirected and the function  $f:E\to\mathbb Z_+^2$ is defined on the set of its edges. The first component $f_1(e)$ of the ordered pair  $f(e)=\left(f_1(e),f_2(e)\right)$ for a given edge $e\in E$ represents the time for traveling over the edge $e$. The second component $f_2(e)$ represents the weight of $e$.

 A path in the graph $G$ is a sequence of vertices $(v_1, v_2, \dots, v_k)$ such that for each $i\in\{1,2,\dots, k-1\}$ there is an edge between $v_i$ and $v_{i+1}$, i.e. $(v_i,v_{i+1})\in E$. 
For each path $\pi = (v_1, \dots, v_k)$ we define $F_1(\pi)$ as the total time it takes to travel over  $\pi$ and $F_2(\pi)$ as the sum of the weights of all edges in $\pi$. Formally,
\begin{eqnarray*}
F_1(\pi)=\sum_{i=1}^{k-1} f_1\left(v_{i},v_{i+1}\right)\quad\mbox{and}\quad
F_2(\pi)=\sum_{i=1}^{k-1} f_2\left(v_i,v_{i+1}\right).
\end{eqnarray*}

Let $A, B\subseteq V$ be two fixed disjoint subsets of $V$ and let $M\in\mathbb R_+$ be a fixed positive real number.
 Among all paths that connect sets $A$ and $B$ let us denote by $\hat\pi$ the one (or one of) for which $F_1(\pi)$ is minimal under the constraint $F_2(\pi)< M$. We will describe an algorithm whose output will be $F_1(\hat \pi)$ for a given graph $G$. 

The algorithm belongs to a class of label correcting algorithms \cite{irnich_desaulniers, mehlhorn_ziegelmann}. The construction of labels will aim to minimize the memory consumption on SIMD devices such as graphic cards. Consequently, the output will not be sufficient to determine the exact minimizing path.
The reconstruction of the minimizing path is possible with subsequent applications of the method, because the output can include the vertex $X\in B$ that is the endpoint of $\hat \pi$, the last edge $x$ on the path $\hat \pi$, and the value $F_2(\hat \pi)$. 
Once $X$ and $x$ are found, the entire process can be repeated for the graph $G'(V',E')$ with $$V'=V\setminus B,\quad A'=A, \quad B'=\{X\}, \quad\mbox{ and }\quad M'=F_2\left(\hat \pi\right)-f_2(x).$$ The result will be second to last vertex on the minimizing path $\hat \pi$. All other vertices on $\hat\pi$ can be found in the same way. 

Although the algorithm works for general graphs and integer-valued functions $f$, its implementation on SIMD hardware requires the vertices to have bounded degree. This requirement is satisfied by subgraphs of $\mathbb Z^d$. 

 Finding the length of the shortest path in graph is equivalent to finding the shortest passage time in first passage percolation. Each of the vertices in $A$ can be thought of as a source of water. The value $f_1(e)$ of each edge $e$ is the time it takes the water to travel over $e$. Each drop of water has its {\em quality} and each drop that travels through edge $e$ looses $f_2(e)$ of its quality. Each vertex $P$ of the graph has a label $\mbox{\em Label}(P)$ that corresponds to the quality of water that is at the vertex $P$. Initially all vertices in $A$ have label $M$ while all other vertices have label $0$.  The drops that get their quality reduced to $0$ cannot travel any further. The time at which a vertex from $B$ receives its first drop of water is exactly the minimal $F_1\left(\pi\right)$ under the constraint $F_2(\pi)<M$. 

Some vertices and edges in the graph are considered {\em active}. Initially, the vertices in $A$ are {\em active}. All edges adjacent to them are also called {\em active}.
Each cycle in algorithm corresponds to one unit of time. During one cycle the water flows through active edges and decrease their time components by $1$. Once an edge gets its time component reduced to $0$, the edge becomes {\em used} and we look at the source $S$ and the destination $D$ of this water flow through the edge $e$. The destination $D$ becomes {\em triggered}, and its label will be {\em corrected}. The label correction is straight-forward if the edge $D$ was inactive. We simply check whether $\mbox{\em Label}(S)-f_2(e)>\mbox{\em Label}(D)$, and if this is true then the vertex $D$ gets its label updated to $\mbox{\em Label}(S)-f_2(e)$ and its status changed to {\em active}. If the vertex $D$ was active, the situation is more complicated, since the water has already started flowing from the vertex $D$. The existing water flows correspond to water of quality worse than the new water that has just arrived to $D$. We resolve this issue by introducing phantom edges to the graph that are parallel to the existing edges. The phantom edges will carry this new high quality water, while old edges will continue carrying their old water flows. A vertex stops being active if all of its edges become used, but it may get activated again in the future.

 \section{Related problems in the literature}
The assignment of phantom edges to the vertices of the graph and their removal is considered a label correcting approach in solving the problem. Our particular choice of label correction is designed for large graphs in which the vertices have bounded degree. Several existing serial computation algorithms can find the shortest path by maintaining labels for all vertices. The labels are used to store the information on the shortest path from the source to the vertex and additional preprocessing of vertices is used to achieve faster implementations \cite{boland_dethridge_dumitrescu, desrochers_desrosiers_solomon}. The ideas of first passage percolation and label correction have naturally appeared in the design of {\em pulse algorithms} for constrained shortest paths \cite{lozano_medaglia}. All of the mentioned algorithms can also be parallelized but this task would require a different approach in designing a memory management  that would handle the label sets in programming environments where dynamical data structures need to be avoided.

The method of aggressive edge elimination \cite{muhandiramge_boland} can be parallelized to solve the Lagrange dual problems. 
In the case of road and railroad networks a substantial speedup can be achieved by using a preprocessing of the network data and applying a generalized versions of Dijkstra's algorithm \cite{kohler_mohring_schilling}.

The parallel algorithm that is most similar in nature to the one discussed in this paper is developed for wireless networks \cite{li_wan_wang_frieder}. There are two features of wireless networks that are not available to our model. The first feature is that the communication time between the vertices can be assumed to be constant. The other feature is that wireless networks have a processing element available to each vertex. Namely, routers are usually equipped with processors. Our algorithm is build for the situations where the number of processing cores is large but not at the same scale as the number of vertices. On the other hand our algorithm may not be effective for the wireless networks since the underlying graph structure does not imply that the vertices are of bounded degree. 
The increase of efficiency of wireless networks can be achieved by solving other related optimization problems. One such solution is based on constrained node placement \cite{satyajayant_majd_huang}.  

The execution time of the algorithm is influenced by the sizes of the sets of active vertices, active edges, and phantom edges. The sizes of these sets are order of magnitude smaller than the size of the graph. Although this cannot be proved at the moment, we will provide a justification on how existing conjectures and theorems from the percolation theory provide some estimates on the sizes of these sets.  The set of active vertices is related to the limit shape in the model of first passage percolation introduced by Hammersley and Welsh \cite{hammersley_welsh}. The first passage percolation corresponds to the case $M=\infty$, i.e. the case when there are no constraints. If we assume that $A=\{0\}$, for each time $t$ we can define the ball of radius $t$ in the first passage percolation metric as: 
\begin{eqnarray*}B_t=\left\{x: \tau(0,x)\leq t\right\},\end{eqnarray*} where $\tau(0,x)$ is the  {\em first passage time}, i.e. the first time at which the vertex $x$ is reached.  

The active vertices at time $t$ are located near the boundary of the ball $B_t$. It is known that for large $t$ the set $\frac1tB_t$ will be approximately convex. More precisely, it is known \cite{cox_durrett} that there is a convex set $B$ such that 
\begin{eqnarray*}\mathbb P\left((1-\varepsilon)B\subseteq \frac1tB_t\subseteq (1+\varepsilon)B \mbox{ for large }t \right)=1.\end{eqnarray*}

However, the previous theorem does not guarantee that the boundary of $B_t$ has to be of zero volume. In fact the boundary can be non-polygonal as was previously shown \cite{damron_hochman}. 

The set of active vertices does not coincide with the boundary of $B_t$, but it is expected that if $\partial B_t$ is of small volume then the number of active vertices is small in most typical configurations of random graphs. We provide an example for which the set of active vertices is a large fractal, but simulations suggest that this does not happen in average scenario. 

The fluctuations of the shape of $B_t$ are expected to be of order $t^{2/3}$ in the case of $\mathbb Z^2$ and the first passage time $\tau(0,n)$ is proven to have fluctuations of order at least $\log n$  \cite{newman_piza}. The fluctuations are of order at most $n/\log n$ \cite{benjamini_kalai_schramm, benaim_rossignol} and are conjectured to be of order $t^{2/3}$. They can be larger and of order $n$ for modifications of $\mathbb Z^2$ known as thin cylinders \cite{chatterjee_dey}.

The scaling of $t^{2/3}$ for the variance is conjectured for many additional interface growth models and is related to the Kardar-Parisi-Zhang equation \cite{amir_corwin_quastel, krug_spohn, sasamoto_spohn}. 

The constrained first passage percolation problem is a discrete analog to Hamilton-Jacobi equation. The large time behaviors of its solutions are extensively studied and homogenization results are obtained for a class of Hamiltonians \cite{scott_hung_yifeng_2014, krv, kyz, souganidis_1999}. Fluctuations in dimension one are of order $t$ \cite{rezakhanlou_clt} while in higher dimensions they are of lower order although only the logarithmic improvement to the bound has been achieved so far \cite{matic_nolen}.

\section{Example}
\noindent
Before providing a more formal description of the algorithm we will illustrate the main ideas on one concrete example of a graph. 
Consider the graph shown in Figure \ref{fi:figure 1} that has $12$ vertices labeled as $1$, $2$, $\dots$, $12$. The set $A$ contains the vertices $1$, $2$, and $3$, and the set $B$ contains only the vertex $12$. The goal is to find the length of the shortest path from $A$ to $B$ whose total weight is smaller than $19$.

\begin{figure} 
\centering
\begin{center}
\includegraphics[scale=0.25]{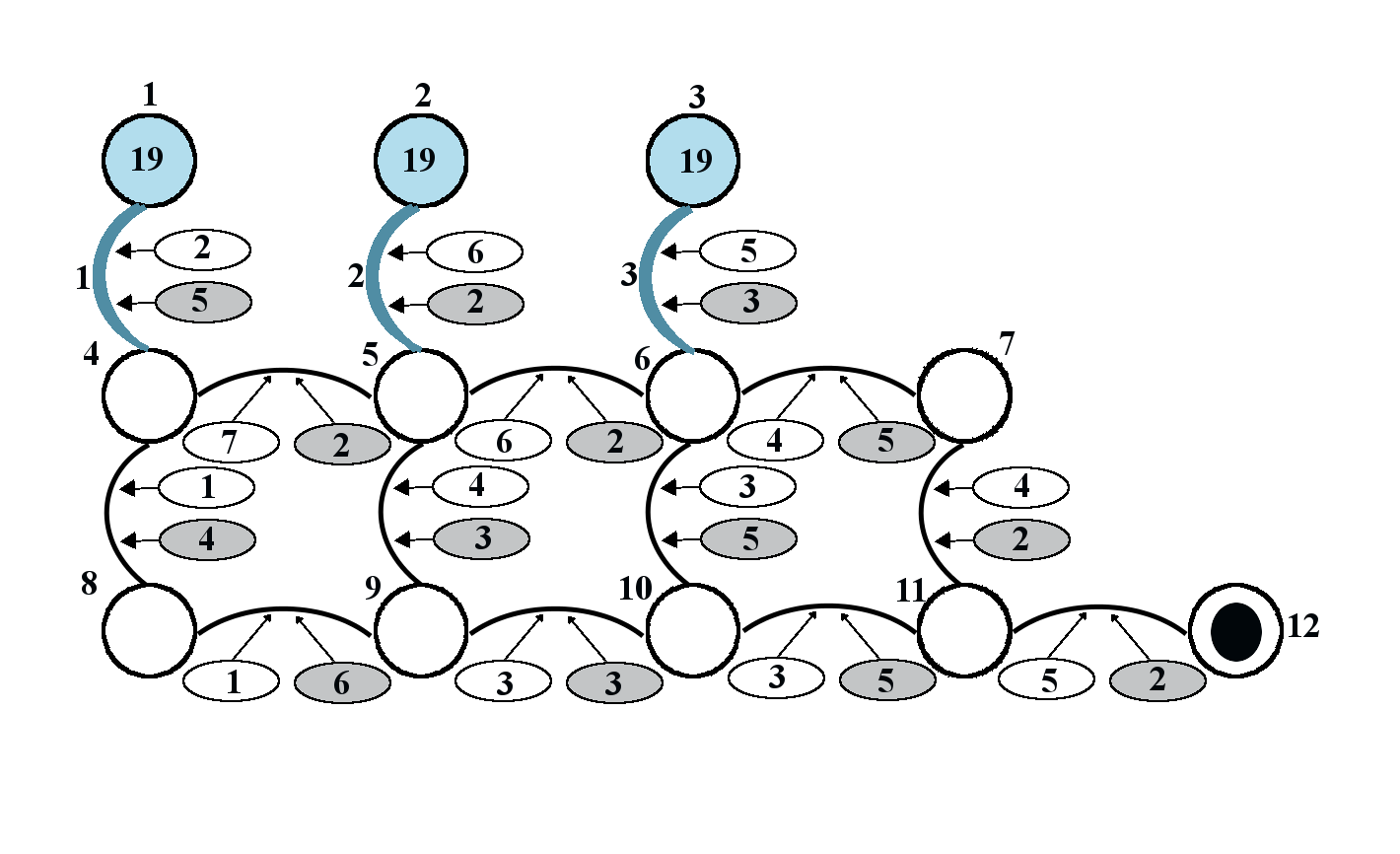}  
\end{center}
\caption{\label{fi:figure 1} The initial state of the graph.}
\end{figure}
The vertices are drawn with circles around them. The circles corresponding to the vertices in $A$ are painted in blue and have the labels $19$. The picture contains the time and weight values for each of the edges. The time parameter of each edge is written in the empty oval, while the weight parameter is in the shaded oval. Since the number of edges in this graph is relatively small it is not difficult to identify the the minimizing path $(3$, $6$, $10$, $11$, $12)$. The time required to travel over this path is $16$ and the total weight is $15$. 

Initially, the vertices in set $A$ are called {\em active}. Active vertices are of blue color and edges adjacent to them are painted in blue. These edges are considered {\em active}. Numbers written near their centers represent the sources of water. For example, the vertex $2$ is the source of the flow that goes through the edge $(2,5)$.

Notice that the smallest time component of all active edges is $2$. The first cycle of the algorithm begins by decreasing the time component of each active edge by $2$. The edge $(1,4)$ becomes {\em just used} because its time component is decreased to $0$. The water now flows from the vertex $1$ to the vertex $4$ and its quality decreases by $5$, since the weight of the edge $(1,4)$ is equal to $5$. The vertex $4$ becomes active and its label is set to $$\mbox{\em Label}(4)=\mbox{\em Label}(1)-f_2(1,4)=19-5=14.$$ The edge $(1,4)$ becomes used, and the vertex $1$ turns into inactive since there are no active edges originating from it. Hence, after two seconds the graph turns into the one shown in Figure \ref{fi:figure 2}. 
\begin{figure}
\centering
\begin{center}
\includegraphics[scale=0.25]{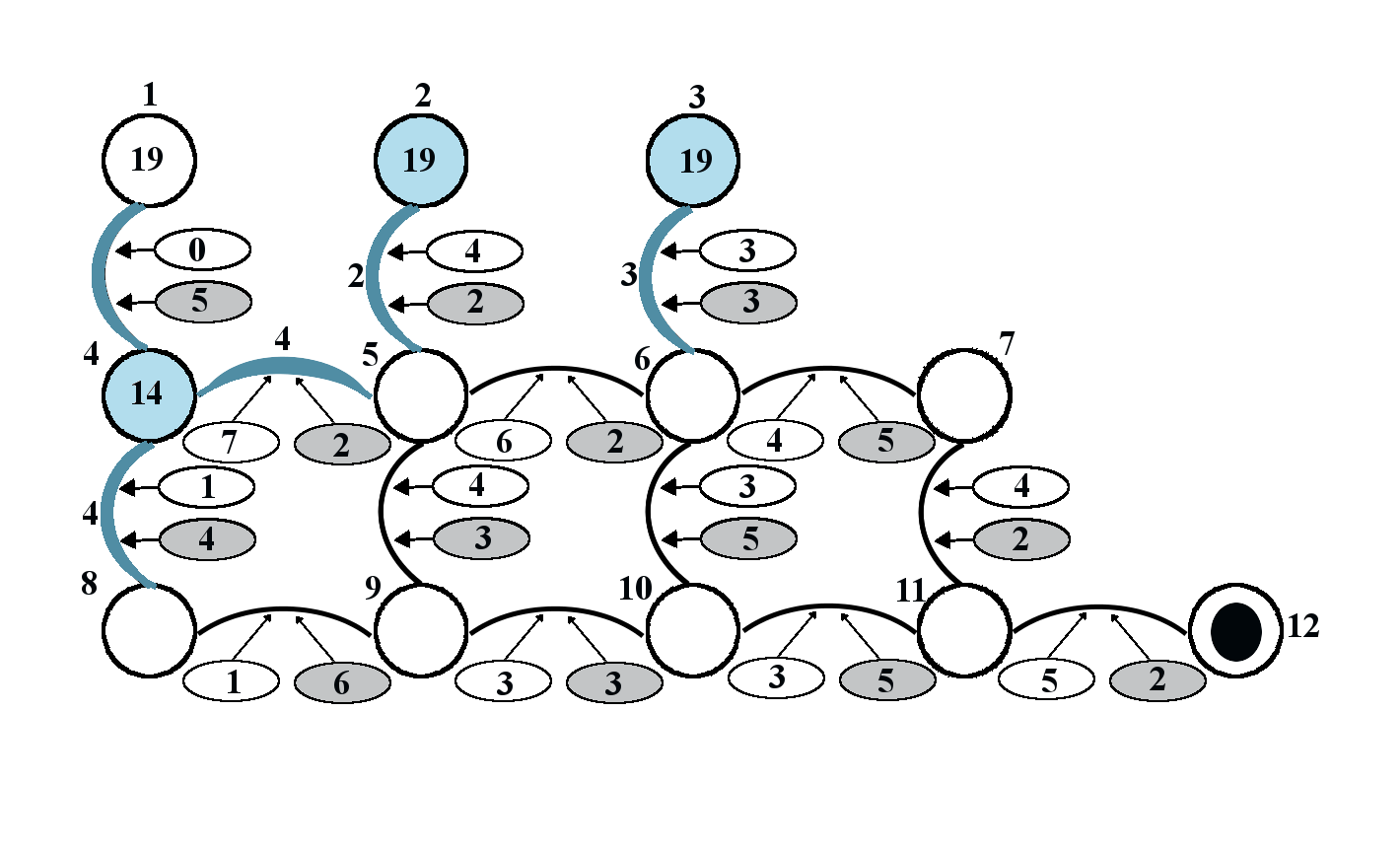}  
\end{center}
\caption{\label{fi:figure 2} The configuration after the second 2.}
\end{figure}

The same procedure is repeated until the end of the $5$th second and the obtained graph is the top one in Figure \ref{fi:figure 3}. In the $6$th second the edge $(2,5)$ gets its time parameter decreased to $0$ and the vertex $5$ gets activated. Its label becomes $$\mbox{\em Label}(5)=\mbox{\em Label}(2)-f_2\left(2,5\right)=19-2=17.$$ However, the edges $(4,5)$, $(5,9)$, and $(5,6)$ were already active and the water was flowing through them towards the vertex $5$. 

The old flow of water through the edge $(4,5)$ will complete in additional $5$ seconds. However, when it completes the quality of the water that will reach the vertex $5$ will be $$\mbox{\em Label}(4)-f_2\left(4,5\right)=14-2=12<\mbox{\em Label}(5)$$ because the label of the vertex $5$ is $17$. Thus there is no point in keeping track of this water flow. On the other hand, the water flow that starts from $5$ and goes towards $4$ will have quality $$\mbox{\em Label}(5)-f_2(4,5)=17-2=15$$ which is higher than the label of the vertex $4$. Thus the edge $(4,5)$ will change its source from $4$ to $5$ and the time parameter has to be restored to the old value $7$. At this point the vertex $4$ becomes inactive as there is no more flow originating from it.

The same reversal of the direction of the flow happens with the edge $(5,9)$. On the other hand, something different happens to the edge $(5,6)$: it stops being active. The reason is that the old flow of water from $6$ to $5$ will not be able to increase the label of the vertex $5$. Also, the new flow of water from $5$ to $6$ would not be able to change the label of vertex $6$. 
\begin{figure}
\centering
\begin{center}
\includegraphics[scale=0.20]{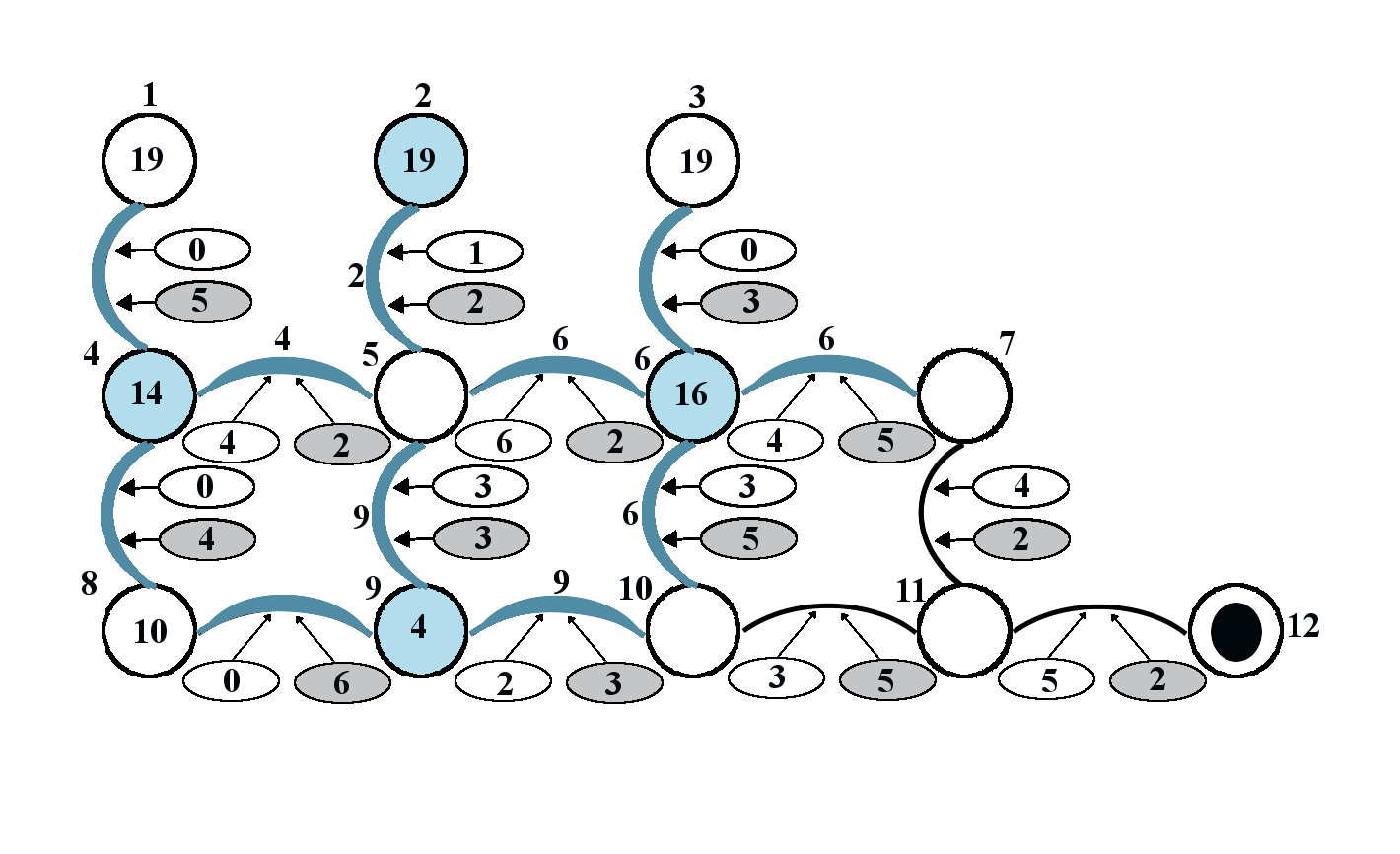}  \hfill \includegraphics[scale=0.20]{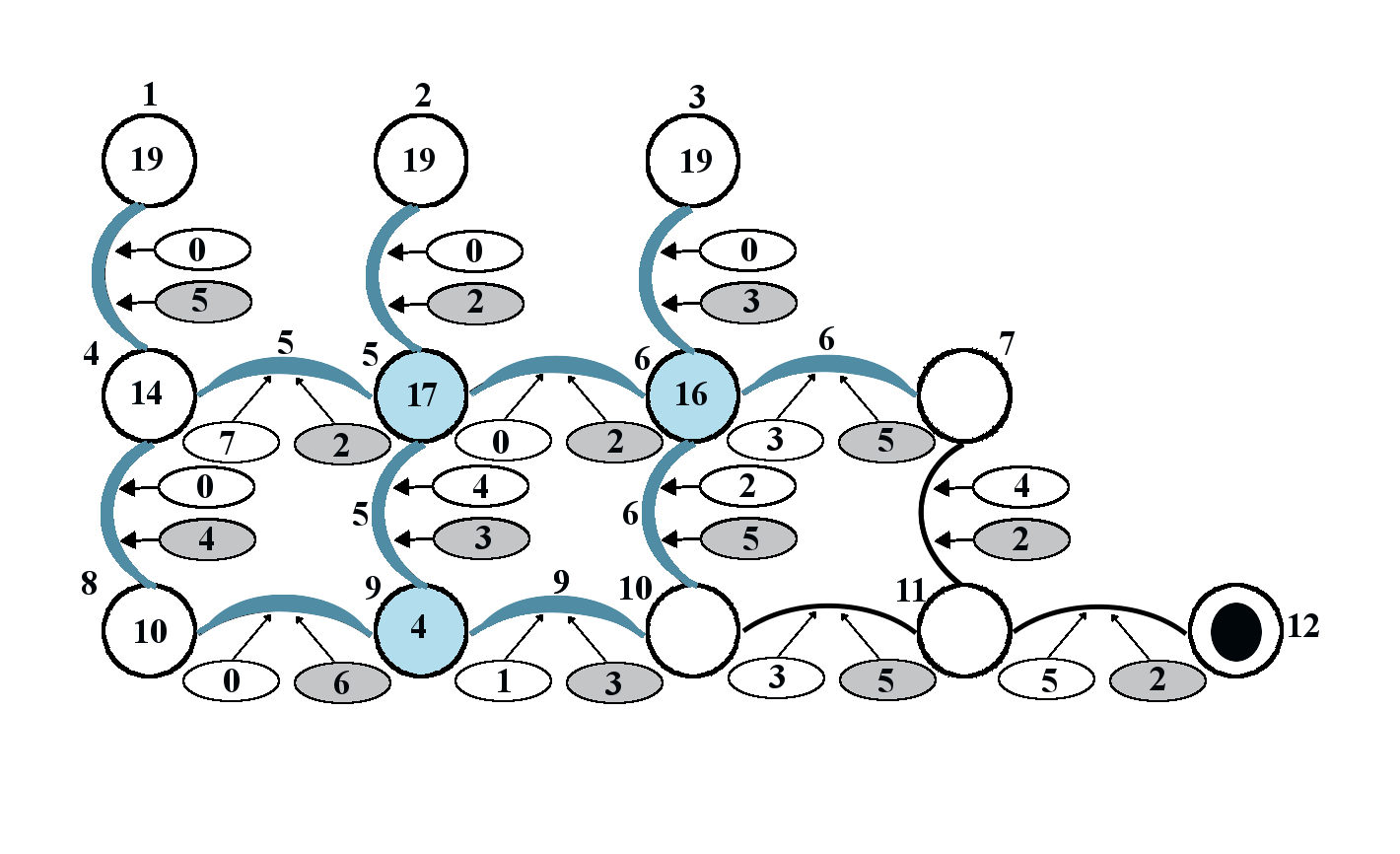} 
\end{center}
\caption{\label{fi:figure 3} The configurations after the seconds 5 and 6.}
\end{figure}

\begin{figure}[h]\centering
\begin{center}
\includegraphics[scale=0.20]{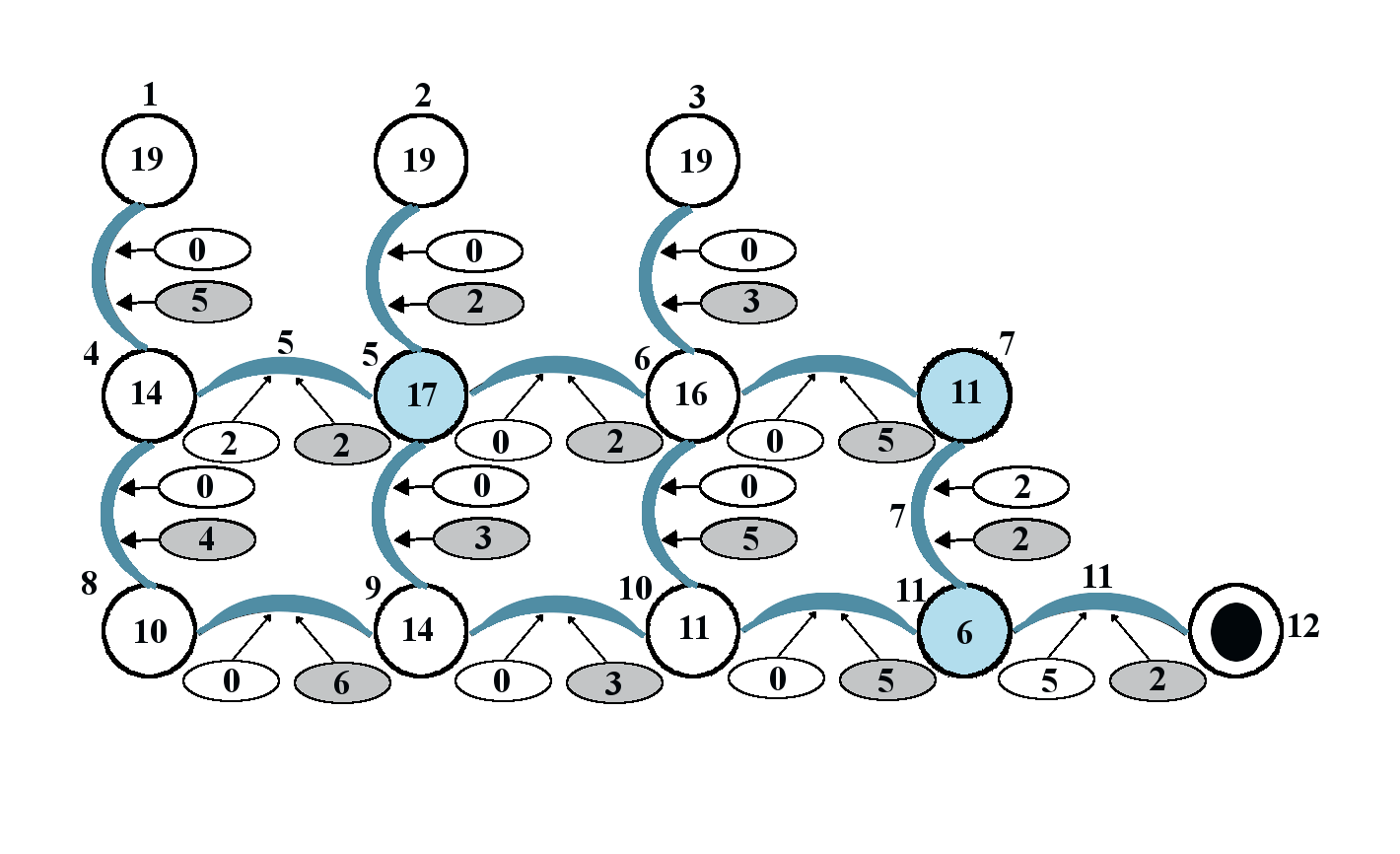}  \hfill \includegraphics[scale=0.20]{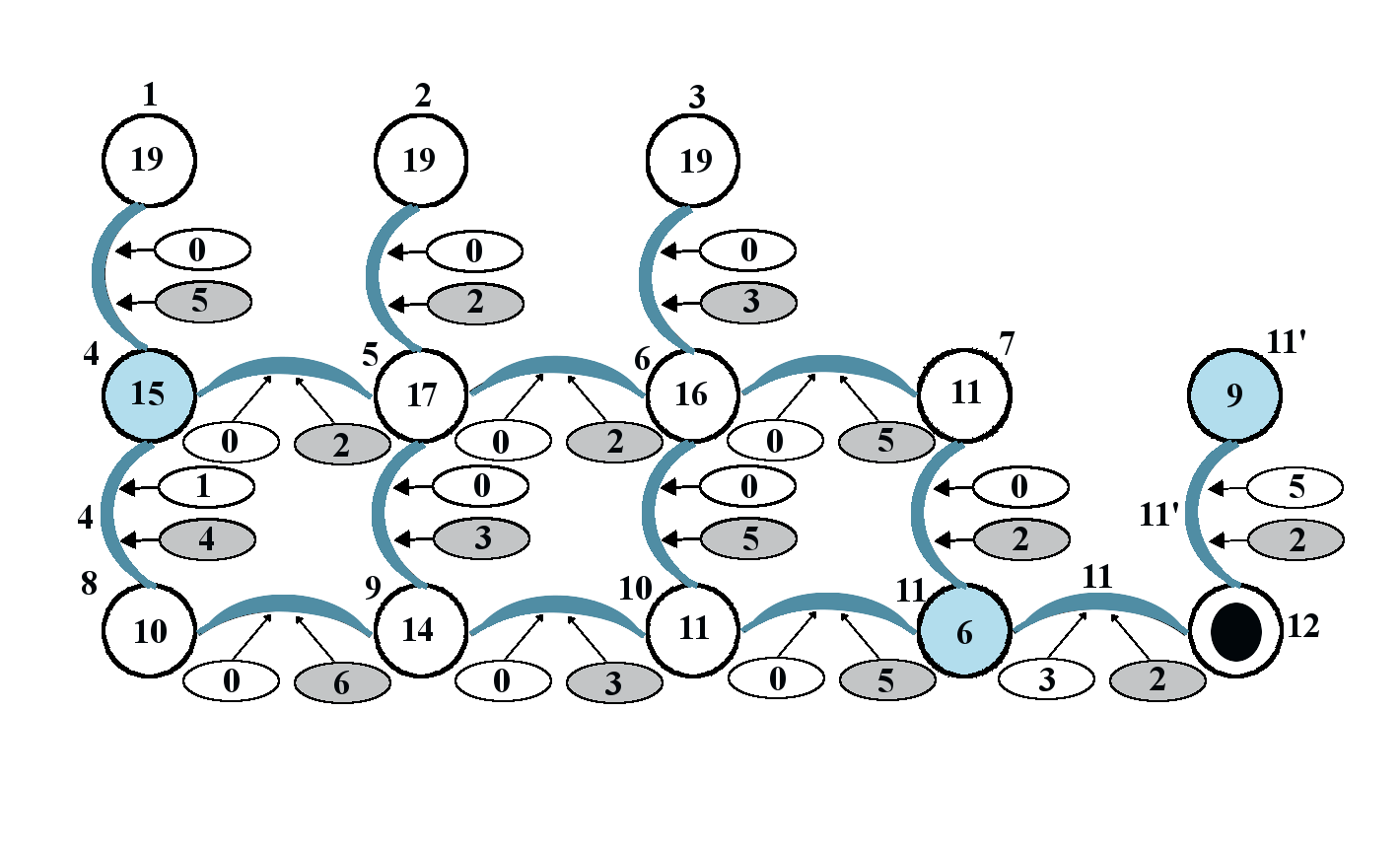} 
\end{center}
\caption{\label{fi:figure 4} The configurations after the seconds 11 and 13.}
\end{figure}

A special care has to be taken when a water flow reaches a vertex that is already active. In the case of the graph $G$ such situation happens after the $11$th second. The configuration is shown in the top picture of Figure \ref{fi:figure 4}.  
The edge $(7,11)$ has the smallest time parameter $2$. The time will progress immediately to 13 and all active edges get their time parameters decreased by $2$. In the $13$th second the water from the edge $(7,11)$ reaches the vertex $11$. The label of vertex $7$ is $\mbox{\em Label}(7)=11$, while $\mbox{\em Label}(11)=6$. The weight of the flow over the edge between these two vertices is $2$, hence this new water is of higher quality than the one present at the vertex $11$. 

In this situation we consider every active edge originating from $11$ and create a phantom edge through which this new water will flow. We will create a new vertex $11'$ with label $$\mbox{\em Label}(11')=\mbox{\em Label}(7)-f_2(7,11)=9$$ and connect it with each of the neighbors of $11$ that can get their label increased with the new flow. The only one such neighbor is $12$ and we obtain the graph as shown in the lower part of  Figure \ref{fi:figure 4}. 

It can be now easily verified that after additional $3$ seconds, i.e. in the end of the second $16$ the vertex $12$ becomes active with the label $4$. Thus we conclude that it takes water to travel $16$ seconds over the shortest path. The total weight of the shortest path is $19-4=15$. The minimizing path is $(3$, $6$, $10$, $11$, $12)$.

\section{Pseudo-code of the algorithm}
We will organize the algorithm by dividing it into smaller components. The first component is the initialization, and the others are performed in the main loop that consists of 9 major steps. The parallelizations will happen only in these individual steps of the main loop. The pseudo-code for the main function is presented in Algorithm \ref{constrainedShortestPath}. Each step will be described in full details and accompanied by a pseudo-code that outlines the main ideas. For the sake of brevity, some data structures in pseudo-code will be modeled with sets. However, the usage of sets is avoided as they cannot support insertion and deletion of elements in parallel. The sets are replaced by indicator sequences for which appropriate operations are easier to parallelize. 
For the full source code the reader is referred to \cite{matic_code}.

\begin{algorithm}
\caption{Main function}\label{constrainedShortestPath}
{\bf Input: } Graph $G=(V,E)$;  $A, B\subset G$ such that $A\cap B=\emptyset$, $M\in\mathbb R$, two functions $f_1, f_2:E\to\mathbb R$.

$\quad\quad\quad$ $f_1(e)$ is the time to travel over the edge $e$ and $f_2(e)$ is the weight of the edge $e$. 

{\bf Output: } The shortest time to travel from $A$ to $B$ over a path whose weight is less than $M$.

\begin{algorithmic}[1]
\Function{main}{}
\State Initialization
\State $L=$ShortestTravelTimeAndTerminalConditionCheck
\While{$L=0$}
\State TriggerVertices
\State AnalyzeTriggeredVertices
\State GetInputFromPhantoms
\State TriggerEdges
\State TreatTriggeredEdges
\State $L=$ShortestTravelTimeAndTerminalConditionCheck
\State FinalTreatmentOfPhantoms
\State FinalTreatmentOfVertices
\State FinalTreatmentOfActiveEdges
\EndWhile
\State\Return L
\EndFunction
\end{algorithmic}
\end{algorithm}

\section{Memory management and initialization}
\subsection{Labels for vertices and edges}
\noindent  In this section we will describe the memory management of variables necessary for the implementation of the algorithm. Before providing the precise set of variables let us describe the information that has to be carried throughout the execution process. As we have seen before, the vertices will have labels assigned to them. %Denote by $\mbox{\em Label}(P)$ the label of the vertex $P$.  
Initially we label each vertex of $G$ with $0$ except for vertices in $A$ which are labeled by $M$. 

To each vertex and edge in $G$ we assign a {\em State}. The vertices have states in the set $\{${\em active}, {\em inactive}$\}$. Initially all vertices in $A$ are {\em active}, while the other vertices are {\em inactive}. 
The states of the edges belong to the set $\{${\em active}, {\em passive}, {\em used}, {\em just used}$\}$.
Initially the edges adjacent to the vertices in $A$ are set to active while all other are passive.

To each edge we associate a pointer to one of its endpoints and call it  {\em Source}. This variable is used at times when the water is flowing through the edge and it records the source of the current water flow. Initially, to each edge that originates from a vertex in $A$ we set the source to be the pointer to the vertex in $A$. All other edges have their source initially set to $0$.

There is additional variable  $\mbox{\em Time}$ that represents the time and is initially set to $0$. %We will also need two sets of vertices called {\em TriggeredVertices} and {\em PhantomVertices} that are initialized as empty sets. 

\subsection{Termination}
The algorithm terminates if one of the following two conditions is satisfied:
\begin{enumerate}
\item[$1^{\circ}$] A vertex from $B$ becomes {\em active}. The variable $\mbox{\em Time}$ contains the time it takes to reach this vertex along the shortest path $\hat \pi$, i.e. $$\mbox{\em Time}=F_1\left(\hat\pi\right).$$ The label of the last vertex $\hat B$ on the path allows us to determine the value $F_2\left(\hat \pi\right)$. Namely, $$F_2 \left(\hat \pi\right)=M-\mbox{Label}(\hat B).$$ 

We will not go into details on how to recover the exact shortest path. Instead we will just outline how this can be done. We need to identify the {\em used} edge $f$ (or one of the used edges, if there are more than one) that is adjacent to $\hat B$. This edge can help us in finding the second to last point of the path $\hat \pi$. Let us denote by $F$  the other endpoint of $f$. It could happen that $F$ is a phantom vertex (i.e. a copy of another vertex), and we first check whether $F\in \mbox{\em PhantomVertices}$. If this is not the case, then $F$ is the second to last element of the path $\hat \pi$. If $F\in\mbox{\em PhantomVertices}$ then the vertex $F$ is a copy of some other vertex in the graph and the phantom vertex $F$ has the pointer to the original based on which it is created. This original vertex is the second to last point on the path $\hat\pi$.

\item[$2^{\circ}$] There is no {\em active} edge in the graph. In this case there is no path that satisfies the constraint $F_2\leq M$. 
\end{enumerate} 

\begin{algorithm}\caption{Function that checks whether the algorithm has finished and returns the time for travel over the shortest path}
\label{terminalCondition}
\begin{algorithmic}[1]
\Function{ShortestPathLengthAndTerminalConditionCheck}{}
\State // Returns $0$ if the path is not found yet.
\State // Returns $-1$ if there is no path with weight smaller than $M$.
\State // Returns the weight of the shortest path if it is found.
\State // A non-zero return value is the indication that the algorithm is over.
\State $\#${\bf Performed in parallel}
\If{ $\exists B_0\in B$ such that $B_0=\mbox {\em active}$}
\State $\mbox{\em result}\gets \mbox{\em Time}$
\Else 
\If{there are no active vertices}
\State  $\mbox{\em result}\gets -1$
\Else 
\State $\mbox{\em result}\gets 0$
\EndIf
\EndIf
\State $\#${\bf barrier}
\State \Return $\mbox{\em result}$
\EndFunction
\end{algorithmic}
\end{algorithm}

\subsection{Sequences accessible to all processing elements} 
It is convenient to store the vertices and edges in sequences accessible to all processing elements. We will assume here that the degree of each vertex is bounded above by $d$. 

\subsubsection{Vertices} Each vertex takes $5$ integers in the sequence of vertices. The first four are name, label, status, and the location of the first edge in the sequence of edges. The fifth element is be used to store a temporary replacement label. Initially, and between algorithm steps, this label is set to $-1$.

When a first drop of water reaches an   inactive vertex $V$, we say that the vertex is {\em triggered}, and that state exists only temporarily during an algorithm cycle. In the end of the algorithm cycle some triggered vertices become active. However it could happen that a triggered vertex does not get a water flow of higher quality than the one already present at the vertex. The particular triggered vertex with this property does not get activated.

\subsubsection{Edges} Each edge $e$ takes $8$ integers in the sequence of edges. Although the graph is undirected, each edge is stored twice in the memory. 
The $8$ integers are the start point, the end point, remaining time for water to travel over the edge (if the edge is active), the weight of the travel $f_2(e)$, the initial passage time $f_1(e)$,  the label of the vertex that is the source of the current flow through the edge (if there is a flow), status, and the location of the same edge in the opposite direction.

\subsubsection{Active vertices} 
The sequence contains the locations of the vertices that are active. This sequence removes the need of going over all vertices in every algorithm step. The locations are sorted in decreasing order. In the end of the sequence we will add triggered vertices that will be joined to the active vertices in the end of the cycle. 

\subsubsection{Active edges}
The role of the sequence is similar to the one of active vertices. The sequence maintains the location of the active edges. Each edge is represented twice in this sequence. The second appearance is the one in which the endpoints are reversed. The locations are sorted in decreasing order. During the algorithm cycle we will append the sequence with triggered edges. In the end of each cycle the triggered edges will be merged to the main sequence of active edges.

\subsubsection{Sequence of phantom edges} The phantom edges appear when an active vertex is triggered with a new drop of water. Since the vertex is active we cannot relabel the vertex. Instead each of the edges going from this active triggered vertex need to be doubled with the new source of water flowing through these new edges that are called phantoms. They will disappear once the water finishes flowing through them. 

\subsubsection{Sequence of elements in $B$} Elements in $B$ have to be easily accessible for quick check whether the algorithm has finished. For this reason the sequence should be in global memory.

Listing \ref{initializationProcedure} summarizes the initializing procedures. 

\begin{algorithm}
\caption{Initialization procedure}
\label{initializationProcedure}
\begin{algorithmic}[1]
\Procedure{Initialization}{}
\For {$e\in E$ }
\State $\mbox{\em State}(e)\gets \mbox{\em passive}$ 
\State $\mbox{\em Source}(e)\gets 0$
\State $\mbox{\em TimeRemaining}(e)\gets 0$ 
 \EndFor
\For{$v\in V\setminus A$}
\State $\mbox{\em State}(v)\gets \mbox{\em inactive}$
\State $\mbox{\em Label}(v)\gets 0$

\EndFor

\For {$v\in A$ }
\State $\mbox{\em State}(v)\gets \mbox{\em active}$
\State $\mbox{\em Label}(v)\gets M$
\For {$e\in \mbox{\em Edges}(v)$ }
\State $\mbox{\em State}(e)\gets\mbox{\em active}$ 
\State $\mbox{\em Source}(e)\gets v$
\State $\mbox{\em TimeRemaining}(e)\gets f_1(e)$
 \EndFor
\EndFor
\State $\mbox{\em TriggeredVertices}\gets \emptyset$
\State $\mbox{\em PhantomVertices}\gets \emptyset$
\State $\mbox{\em PhantomEdges}\gets \emptyset$
\State $\mbox{\em Time}\gets 0$
\EndProcedure
\end{algorithmic}

\end{algorithm}

\section{Graph update} \noindent The algorithm updates the graph in a loop until one vertex from $B$ becomes active. Each cycle consists of the following nine steps. 

\subsection{Step 1: Initial triggering of vertices} In this step we go over all active edges and decrease their time parameters by $m$, where $m$ is the smallest remaining time of all active edges. 
If for any edge the time parameter becomes $0$, the edge becomes {\em just used} and its destination triggered.

To avoid the danger of two processing elements writing in the same location of the sequence of active vertices, we have to make sure that each processing element that runs concurrently has pre-specified location to write. This is accomplished by first specifying the number of threads in the separate variable {\em nThreads}. Whenever kernels are executed in parallel we are using only {\em nThreads} processing elements. Each processing element has its id number which is used to determine the memory location to which it is allowed to write. 
The sequence of triggered vertices has to be cleaned after each parallel execution and at that point we take an additional step to ensure we don't list any of the vertices as triggered twice.

\begin{algorithm}
\caption{Procedure TriggerVertices}
\label{step1}
\begin{algorithmic}[1]
\Procedure{TriggerVertices}{}
\State $\mbox{\em TriggeredEdges}\gets \emptyset$
\State $\#${\bf Performed in parallel}
\State $m\gets \min\left\{\mbox{\em TimeRemaining}(e): e\in \mbox{\em ActiveEdges}\right\}$ 
\State $\#${\bf barrier}
\State $\mbox{\em Time}\gets \mbox{\em Time}+m$
\State $\#${\bf Performed in parallel}
\For{$e \in \mbox{\em ActiveEdges}$}
\State $\mbox{\em TimeRemaining}(e)\gets\mbox{\em TimeRemaining}(e)-m$
\If{$\mbox{\em TimeRemaining}(e)=0$}
\State $\mbox{\em State}(e)=\mbox{\em just used}$
\State $S_e\gets \mbox{\em Source}(e)$
\State $D_e\gets \mbox{\em TheTwoEndpoints}(e)\setminus \{S_e\}$
\State $\mbox{\em TriggeredVertices}\gets \mbox{\em TriggeredVertices}\cup \{D_e\}$
\EndIf
\EndFor
\State $\#${\bf barrier}

\EndProcedure
\end{algorithmic}

\end{algorithm}

\subsection{Step 2: Analyzing triggered vertices} For each triggered vertex $Q$ we look at all of its edges that are just used. We identify the largest possible label that can result from one of just used edges that starts from $Q$. That label will be stored in the sequence of vertices at the position reserved for temporary replacement label. The vertex is labeled as just triggered. If the vertex $Q$ is not active, this label will replace the current label of the vertex in one of the later steps. If the vertex $Q$ is active, then this temporary label will be used later to construct an appropriate phantom edge. 

We are sure that different processing elements are not accessing the same vertex at the same time, because before this step we achieved the state in which there are no repetitions in the sequence of triggered vertices. 

\begin{algorithm}
\caption{Analysis of triggered vertices}
\label{step2}
\begin{algorithmic}[1] 
\Procedure{AnalyzeTriggeredVertices}{}
\State $\mbox{\em TempLabel}\gets \emptyset$
\State $\#${\bf Performed in parallel}
\For{$Q \in \mbox{\em TriggeredVertices}$}
\State $\mbox{\em TempLabel}(Q)\gets \max\left\{ \mbox{\em Label}(P)-f_2(P,Q): \mbox{\em State}(P,Q)=\mbox{\em just used}\right\}$
%\State $\mbox{\em State}(Q)\gets \mbox{\em just triggered}$
\EndFor
\State $\#${\bf barrier}

\EndProcedure
\end{algorithmic}

\end{algorithm}

\subsection{Step 3: Gathering input from phantoms} The need to have this step separated from the previous ones is the current architecture of graphic cards that creates difficulties with dynamic memory locations. It is more efficient to keep phantom edges separate from the regular edges. The task is to look for all phantom edges and decrease their time parameters. If a phantom edge gets its time parameter equal to $0$, its destination is studied to see whether it should be added to the sequence of triggered vertices. We calculate the new label that the vertex would receive through this phantom. We check whether this new label is higher than the currently known label and the temporary label from possibly previous triggering of the vertex.  The phantoms will not result in the concurrent writing to memory locations because each possible destination of a phantom could have only one edge that has time component equal to $0$.

\begin{algorithm}
\caption{Input from phantoms}
\label{step3}
\begin{algorithmic}[1] 
\Procedure{GetInputFromPhantoms}{}
\State $\#${\bf Performed in parallel}
%\For{$e \in \mbox{\em PhantomEdges}$}
%\State $\mbox{\em TimeRemaining}(e)\gets \mbox{\em TimeRemaining}(e)-m$
% \If{$\mbox{\em TimeRemaining}(e)=0$}
%\State $\mbox{\em State}(e)=\mbox{\em just used}$
%\State $S_e\gets \mbox{\em Source}(e)$
%\State $D_e\gets \mbox{\em TheTwoEndpoints}(e)\setminus \{S_e\}$
%\State $\mbox{\em TriggeredVertices}\gets \mbox{\em TriggeredVertices}\cup \{D_e\}$
%\EndIf
%\EndFor
\State Decrease time parameters of fantom edges (as in Listing \ref{step1})
\State Trigger the destinations of phantom edges (as in Listing \ref{step1})
\State $\#${\bf barrier}
\State $\#${\bf Performed in parallel}
\State Analyze newly triggered vertices, in a way similar to Listing \ref{step2}
\State $\#${\bf barrier}

\EndProcedure
\end{algorithmic}

\end{algorithm}

\subsection{Step 4: Triggering edges} 
In this step we will analyze the triggered vertices and see whether each of their neighboring edges needs to change the state. Triggered vertices are analyzed using separate processing elements. A processing element analyzes the vertex $Q$ in the following way. 

Each edge $j$ of $Q$ 
will be considered triggered if it can cause the other endpoint to get better label in future through $Q$. The edge $j$ is placed in the end of the sequence of active edges.

\begin{algorithm}
\caption{Procedure that triggers the edges}
\label{step4}
\begin{algorithmic}[1] 
\Procedure{TriggerEdges}{}
\State $\#${\bf Performed in parallel}
\For{$Q \in \mbox{\em TriggeredVertices}$}
\For{$P \in \mbox{\em Neighbors}(Q)$}
\If {$\mbox{\em TempLabel}(Q)-f_2(P,Q)>\mbox{\em Label}(P)$}
\State $\mbox{\em State}(P,Q)\gets\mbox{\em active}$
\State $\mbox{\em TriggeredEdges}\gets \mbox{\em TriggeredEdges}\cup\{(P,Q)\}$
\EndIf
\EndFor
\EndFor

\State $\#${\bf barrier}

\EndProcedure
\end{algorithmic}

\end{algorithm}

\subsection{Step 5: Treatment of triggered edges} 
Consider a triggered edge $j$. We first identify its two endpoints. For the purposes of this step we will identify the endpoint with the larger label, call it the source, and denote by $S$. The other will be called the destination and denoted by $D$. In the end of the cycle, this vertex $S$ will become the source of the flow through $j$.

Notice that at least one of the endpoints is triggered. If only one endpoint is triggered, then we are sure that this triggered endpoint is the one that we designated as the source $S$. 

We then look whether the source $S$ was active or inactive before it was triggered.

\subsubsection{Case in which the source $S$ was inactive before triggering} 
 There are several cases based on the prior status of $j$. If $j$ was passive, then it should become active and no further analysis is necessary. If it was used or just used, then it should become active and the time component should be restored to the original one. Assume now that the edge $j$ was active. Based on the knowledge that $S$ was inactive vertex we can conclude that the source of $j$ was $D$. However we know that the source of $j$ should be $S$ and hence the time component of $j$ should be restored to the backup value. 

Consequently, in the case that $S$ was inactive, regardless of what the status of $j$ was, we are sure its new status must be active and its time component can be restored to the original value. This restoration is not necessary in the case that $j$ was passive, although there is no harm in doing it. 

If the edge $j$ was not active before, then the edge $j$ should be added to the list of active edges. 
If the edge $j$ was active before, then it should be removed from the list of triggered edges because all triggered edges will be merged into active edges. The edge $j$ already appears in the list of active edges and need not be added again. 

\subsubsection{Case in which the source $S$ was active before triggering} In this case we create phantom edges. Each such triggered edge generates four entries in the phantom sequence. The first one is the source, the second is the destination, the third is the 
label of the source (or the label stored in the temporary label slot, if higher), and the fourth is the original passage time through the edge $j$.

\begin{algorithm}
\caption{Treatment of triggered edges}
\label{step5}
\begin{algorithmic}[1] 
\Procedure{TreatTriggeredEdges}{}
\State $\#${\bf Performed in parallel}
\For{$j \in \mbox{\em TriggeredEdges}$}
\State $S\gets \mbox{The endpoint of }j\mbox{ with larger label}$
\State $D\gets \mbox{The endpoint of }j\mbox{ with smaller label}$
\State $\mbox{\em OldStateOfS}\gets \mbox{\em State}(S)$
\State $\mbox{\em OldStateOfJ}\gets \mbox{\em State}(j)$
\If{$\mbox{\em OldStateOfS}=\mbox{\em inactive}$}
 
\State $\mbox{\em State}(j)\gets\mbox{\em active}$
\State $\mbox{\em Source}(j)\gets S$
\State $\mbox{\em TimeRemaining}(j)\gets f_1(j)$

\EndIf
\If{$\mbox{\em OldStateOfS}=\mbox{\em active}$}
\State Create a phantom vertex $S'$ and connect it to $D$
\State $\mbox{\em TimeRemaining}(S',D)\gets f_1(S,D)$
\EndIf
\EndFor

\State $\#${\bf barrier}

\EndProcedure
\end{algorithmic}

\end{algorithm}

\subsection{Step 6: Checking terminal conditions} In this step we take a look whether a vertex from $B$ became active or if there are no active edges. These would be the indications of the completion of the algorithm. The function that checks the terminal conditions is presented earlier in Listing \ref{terminalCondition}.

\subsection{Step 7: Final treatment of phantoms} In this step we go once again over the sequence of phantoms and remove each one that has its time parameter equal to $0$.

%\State FinalTreatmentOfVertices
%\State FinalTreatmentOfActiveEdges

\begin{algorithm}
\caption{Final treatment of phantoms}
\label{step7}
\begin{algorithmic}[1] 
\Procedure{FinalTreatmentOfPhantoms}{}
\State $\#${\bf Performed in parallel}
\For{$j \in \mbox{\em PhantomEdges}$}
\If{$\mbox{\em TimeRemaining}(j)=0$}
\State Remove $j$ and its source from the sequence of phantoms

\EndIf
\EndFor

\State $\#${\bf barrier}

\EndProcedure
\end{algorithmic}

\end{algorithm}

\subsection{Step 8: Final treatment of vertices} In this step of the program the sequence of active vertices is updated so it contains new active vertices and looses the vertices that may cease to be active.

\subsubsection{Preparation of triggered vertices} For each triggered vertex $Q$ we first check whether it was inactive before. If it was inactive then its label becomes equal to the label stored at the temporary storing location in the sequence of vertices. If it was active, its label remains unchanged. The phantoms were created and their labels are keeping track of the improved water quality that has reached the vertex $Q$. 

We may now clean the temporary storing location in the sequence of vertices so it now contains the symbol for emptiness (some pre-define negative number).

\subsubsection{Merging triggered with active vertices} 
Triggered vertices are now merged to the sequence of active vertices. 

\subsubsection{Check active vertices for potential loss of activity} For each active vertex $Q$ look at all edges from $Q$. If there is no active edge whose source is $Q$, then $Q$ should not be active any longer.

\subsubsection{Condensing the sequence of active vertices} After previous few steps some vertices may stop being active in which case they should be removed from the sequence.

%\State FinalTreatmentOfActiveEdges

\begin{algorithm}
\caption{Final treatment of vertices}
\label{step8}
\begin{algorithmic}[1] 
\Procedure{FinalTreatmentOfVertices}{}
\State $\#${\bf Performed in parallel}
\For{$Q\in\mbox{\em TriggeredVertices}$}
\If{$\mbox{\em State}(Q)=\mbox{\em inactive}$}
\State $\mbox{\em State}(Q)\gets \mbox{\em active}$
\State $\mbox{\em Label}(Q)\gets \mbox{\em TempLabel}(Q)$
\EndIf
\EndFor

\State $\#${\bf barrier}
\State $\mbox{\em TempLabel}\gets \emptyset$
\State $\#${\bf Performed in parallel}
\State Merge triggered vertices to active vertices
\State $\#${\bf barrier}
\State $\#${\bf Performed in parallel}
\For{$Q\in\mbox{\em TriggeredVertices}$}
\If{there are no active edges starting from $Q$}
\State $\mbox{\em State}(Q)\gets\mbox{\em inactive}$
\EndIf
\EndFor
\State $\#${\bf barrier}
\EndProcedure
\end{algorithmic}

\end{algorithm}

\subsection{Step 9: Final treatment of active edges} 
We first need to merge the triggered edges with active edges. Then all just used edges have to become used and their source has to be re-set so it is not equal to any of the endpoints. Those used edges should be removed from the sequence of  active edges.

The remaining final step is to condense the obtained sequence so there are no used edges in the sequence of active edges.

\begin{algorithm}
\caption{Final treatment of active edges}
\label{step9}
\begin{algorithmic}[1] 
\Procedure{FinalTreatmentOfActiveEdges}{}
 
\State $\#${\bf Performed in parallel}
\State Merge triggered edges to active edges
\State $\#${\bf barrier}
\State $\#${\bf Performed in parallel}
\State Transform all {\em just used} into {\em used} and erase their {\em Source} components
\State $\#${\bf barrier}

\EndProcedure
\end{algorithmic}

\end{algorithm}

%\begin{algorithmic}[1]
%\Procedure{Initialization2}{}
%\State $\mbox{\em stringlen} \gets \mbox{length of }\mbox{\em string}$
%\State $i \gets \mbox{\em patlen}$
%\State $\mbox{\em top}$:
%\If {$i > \mbox{\em stringlen}$} \Return false
%\EndIf
%\State $j \gets \mbox{\em patlen}$
%\State $\mbox{\em loop}$:
%\If {$\mbox{\em string}(i) = \mbox{\em path}(j)$}
%\State $j \gets j-1$.
%\State $i \gets i-1$.
%\State \mbox{\bf goto}$ \mbox{\em loop}.$
%\State \mbox{\bf close};
%\EndIf
%\State $i \gets i+\max(\textit{delta}_1(\textit{string}(i)),\textit{delta}_2(j))$.
%\State \textbf{goto} \emph{top}.
%\EndProcedure
%\end{algorithmic}

\section{Large sets of active vertices}
\noindent In this section we will prove that it is possible for the set of active vertices in dimension $2$ to contain more than $O(n)$ elements. We will construct examples in the case when the time to travel over each vertex is from the set $\{1,2\}$ and when $M=+\infty$.  

We will consider the subgraph $V_n= [-n,n]\times[0,n]$ of $\mathbb Z^2$. At time $0$ the water is located in all vertices of the $x$ axis. For sufficiently large $n$ we will provide an example of configuration $\omega$ of passage times for the edges of the graph $V_n$ such that the number of active vertices at time $n$ is of order $n\log n$. This would establish a lower bound on the probability that the number of active vertices at time $t$ is large. 

Let us assume that each edge of the graph has the time component assigned from the set $\{1,2\}$ independently from each other. Assume that the probability that $1$ is assigned to each edge is equal to $p$, where $0<p<1$.

\begin{theorem} \label{lower_bound} There exists $t_0\geq 0$, $\mu>0$, and $\alpha>0$ such that for each $t>t_0$ there exists $n$ such that the number $A_t$ of active vertices at time $t$ in the graph $V_n$ satisfies \begin{eqnarray*}\mathbb P\left(A_t\geq \alpha t\log t\right)\geq e^{-\mu t^2}.\end{eqnarray*}
\end{theorem}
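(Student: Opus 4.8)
The plan is to separate the statement into a \emph{deterministic construction} and an \emph{easy probabilistic estimate}. For the probabilistic part, fix $t>t_0$ and set $n=\lceil t\rceil$. The graph $V_n$ has at most $Cn^2\le C't^2$ edges, and each edge independently carries passage time $1$ or $2$ with probabilities $p$ and $1-p$. Hence \emph{any} single prescribed configuration $\omega$ on $V_n$ has probability at least $q^{C't^2}$, where $q=\min\{p,1-p\}$. Setting $\mu=-C'\log q>0$, it therefore suffices to exhibit one configuration $\omega$ for which the number of active vertices at time $t$ is at least $\alpha t\log t$; then $\mathbb P(A_t\ge \alpha t\log t)\ge \mathbb P(\{\omega\})\ge e^{-\mu t^2}$. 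All the difficulty is thus in the deterministic construction, and the exponential probability bound is a signal that only one good configuration is needed.

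For the construction I would build a \emph{self-similar} arrangement of fast ($f_1=1$) and slow ($f_1=2$) edges across roughly $\log_2 n$ dyadic scales, designed so that at the single time $t=n$ many fronts are simultaneously active. The basic mechanism is that a column can acquire order $n$ active vertices at once: if a column is filled \emph{horizontally} over a long range of heights during the last few time units before $t$, every such vertex still has an unreached neighbor on the far side and is therefore active and sending useful water. The key constraint, forced by $f_1\in\{1,2\}$, is that a cliff in the wavefront profile can be tall only if its tall side was itself reached \emph{recently and uniformly} in height; otherwise the factor-$2$ bound on horizontal spillover smooths the interface and only $O(n)$ vertices remain active. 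This recursion---each tall front must be fed by a uniformly-timed reservoir, which must itself be fed by a still earlier reservoir---is exactly what produces a hierarchy of $\log_2 n$ levels, with the height budget halving from level to level so that $\sum_k n\,2^{-k}=O(n)$ fits inside $V_n$.

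Concretely, at scale $k$ I would place a horizontal shelf of fast edges spanning an interval of length of order $n$, positioned at a height and horizontal offset chosen so that a sweeping front reaches it just before time $n$ and then immediately feeds order $n$ vertices of the next finer scale. Choosing the offsets and shelf heights according to the $2^{-k}$ scaling synchronizes all scales: at time $t=n$ each of the $\log_2 n$ scales contributes its own band of order $n$ freshly filled, still-feeding vertices. Summing the contributions over the scales gives a number of active vertices of order $n\log n\ge \alpha t\log t$ for a suitable $\alpha$, which is the desired lower bound.

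The main obstacle is verifying that the \emph{global} first-passage time function behaves as designed, rather than being short-circuited by an unintended path. I would establish this by an induction on the scale $k$: assuming matching upper and lower bounds on $\tau$ at the coarser scales, one shows that inside the scale-$k$ block the cheapest route to each vertex is the intended shelf-then-descend path, so that the intended vertices are reached in the prescribed narrow time window and each genuinely lies on $\partial B_t$ with an unreached neighbor it is feeding. The delicate points are (i) ruling out that horizontal spillover, limited only by the ratio $2$, collapses a tall cliff into a gentle slope of total variation $O(n)$, which is why the reservoirs must be engineered to arrive uniformly in height, and (ii) checking that the flows counted at different scales point to \emph{distinct} vertices, so the order-$n$ contributions genuinely add up to order $n\log n$ without overlap.
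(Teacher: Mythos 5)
Your probabilistic reduction is exactly the one the paper uses: fix a single configuration $\omega$ on $V_n$, bound its probability below by $\rho^{Ct^2}$ with $\rho=\min\{p,1-p\}$, and absorb the constant into $\mu$. So, as you say yourself, the entire content of the theorem is the deterministic construction --- and that is precisely where your proposal stops short of a proof. The shelf positions, the arrival-time accounting, the claim that the height budgets $\sum_k n2^{-k}$ can be packed into $V_n$ consistently with the timing constraints, and the inductive verification that no unintended path short-circuits the design are all deferred (``I would place\dots'', ``I would establish\dots''). Since there is nothing else to the theorem, this is a genuine gap rather than a routine omission of detail.

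Moreover, the specific mechanism you propose --- tall vertical cliffs fed by ``uniformly-timed reservoirs'' --- faces a quantitative obstruction that you flag as delicate point (i) but do not resolve. A vertex is active only while water is still in transit on one of its edges, hence only for $O(1)$ time units after it is first reached (passage times are at most $2$). So a cliff of height $\Theta(n)$ contributes $\Theta(n)$ active vertices \emph{at the single time} $t$ only if all its vertices are reached within an $O(1)$ window; but with speeds in $\{1,2\}$ a fast vertical highway delivers with skew $1$ per unit height and a horizontal front with skew $2$ per unit height, and you give no arrangement achieving uniform delivery over height $\Theta(n)$, nor an argument that the recursion of reservoirs converges to one. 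The paper's construction sidesteps synchronization entirely: instead of stacking simultaneous cliffs, it makes the wavefront $\{\tau=t\}$ itself long. Taking $t=2^k$ and recursively inserting fast ($f_1=1$) segments along cevians of nested triangles, it produces a connected fractal polygonal front whose segments have direction $(\pm1,\pm2)$ --- slopes compatible with the speed ratio $2$, so every vertex on the front is automatically freshly reached. An elementary geometry lemma (a cyclic quadrilateral plus the centroid of $\triangle LOT$) shows each refinement adds exactly $\frac{4}{3\sqrt 5}\,t$ to the front's length, i.e.\ $\frac{4}{15}t$ active vertices per scale, whence $N_t\geq \frac{4}{15}\,t\,k$ after $k=\log_2 t$ stages and $\alpha=\frac{4}{15\log 2}$. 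To complete your route you would have to either prove that order-$n$ uniform-in-height delivery is achievable within the $\{1,2\}$ constraint, or switch, as the paper does, to lengthening the level set with admissible slopes, where the per-scale gain can be computed exactly and the ``no shortcut'' check reduces to the path comparison carried out in the paper's first lemma.
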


To prepare for the proof of the theorem we first study the evolution of the set of active edges in a special case of a graph. Then we will construct a more complicated graph where the set of active edges will form a fractal of length $t\log t$.  

\begin{lemma} If all edges on the $y$-axis have time parameter equal to $1$ and all other edges have their time parameter equal to $2$, then at time $T$ the set of active vertices is given by \begin{eqnarray*}A_T&=&\left\{(0,T)
\right\} \cup\left\{(0,T-1)
\right\} \cup \bigcup_{k=1}^{\left\lfloor \frac{T+1}4\right\rfloor} \left\{\left( -k,T-2k \right),
\left( k,T-2k \right)
\right\} \\ &&\cup \bigcup_{z\in \mathbb Z\setminus \left\{-\left\lfloor\frac{T+1}{4}\right\rfloor, \dots, \left\lfloor \frac{T+1}{4}\right\rfloor\right\}}\left\{\left(z,\left\lfloor \frac T2\right\rfloor\right)\right\}.
\end{eqnarray*}
\end{lemma}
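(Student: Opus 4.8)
The plan is to reduce the statement to a purely geometric description of the ball $B_T$ and its inner boundary, in three stages.

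First I would compute the first-passage time of every vertex. Writing $\tau(x,y)$ for the time at which $(x,y)$ is first wetted, I claim $\tau(x,y)=\min\{2y,\;y+2|x|\}$. The upper bound comes from two explicit routes: climbing straight up from the source $(x,0)$ uses $y$ off-axis vertical edges and costs $2y$, while climbing the fast $y$-axis to height $y$ and then moving horizontally costs $y+2|x|$. For the matching lower bound I would use the potential $\phi(x,y)=\min\{2y,\;y+2|x|\}$ and verify that it is $1$-Lipschitz for the passage-time metric, i.e. $\phi(v)\le\phi(u)+f_1(u,v)$ on every edge $(u,v)$: the only weight-$1$ edges lie on the $y$-axis, where $\phi$ increases by exactly $1$ per vertical step, and across every weight-$2$ edge $\phi$ changes by at most $2$. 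Since $\phi$ vanishes on the source line $\{y=0\}$, telescoping along any path gives $\phi\le\tau$, and together with the explicit routes this yields $\tau=\phi$.

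Next I would translate \emph{active} into a statement about $B_T$. When a vertex $Q$ is first wetted it opens a flow along each edge toward a neighbor that is not yet wet, and the flow along $Q\to Q'$ is the leading (hence the only productive) one precisely until $Q'$ is reached, at time $\tau(Q')$; because $M=+\infty$ no drop ever loses all of its quality and no re-sourcing or phantom edges are created, so this is the only mechanism. Consequently $Q$ still possesses an active outgoing edge at time $T$ if and only if $\tau(Q)\le T$ while some neighbor $Q'$ satisfies $\tau(Q')>T$; that is, the active vertices at time $T$ are exactly the vertices of $B_T$ having a neighbor outside $B_T$, the inner vertex boundary of $B_T=\{(x,y):y\ge 0,\ 2y\le T \text{ or } y+2|x|\le T\}$.

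It then remains to read off this inner boundary. The top of $B_T$ in column $x$ is $g(x)=\max\{\lfloor T/2\rfloor,\;T-2|x|\}$, which splits the picture into a fast-lane regime $|x|\le T/4$, where $g(x)=T-2|x|$ descends in steps of two and produces the diagonal points $(\pm k,T-2k)$, and a flat regime $|x|\ge T/4$, where $g(x)=\lfloor T/2\rfloor$ is constant and contributes the top row $\{(z,\lfloor T/2\rfloor)\}$. I expect the main obstacle to be the bookkeeping where the two regimes meet: I must pin down the exact index $\lfloor (T+1)/4\rfloor$ at which $T-2|x|$ first falls to the flat level $\lfloor T/2\rfloor$, handle the parity-dependent behaviour of the $y$-axis tip (which singles out the extra point $(0,T-1)$), and account for the height-two cliffs between adjacent fast-lane columns, since a cliff of height two can leave a boundary vertex one step below the column top. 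This last point is the delicate part; it reduces to a case analysis on $T \bmod 4$, routine but requiring care to match the floor constants in the statement. The conceptual content lives entirely in the first two stages.
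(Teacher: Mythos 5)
Your first two stages are sound, and they are in fact sharper than what the paper itself does. The paper never computes the passage time $\tau$ globally: it only verifies that the listed diagonal points $(\pm k, T-2k)$ are wetted at time $T$, by comparing the fast route through the $y$-axis against lower bounds for the two kinds of competing paths (those that use the axis and those that avoid it), with a parity case when $4$ divides $T+1$. Completeness of the list --- that no other vertex is active --- is never argued there. Your formula $\tau(x,y)=\min\{2y,\,y+2|x|\}$, proved by explicit routes plus the Lipschitz-potential lower bound, is correct, and your reduction of ``active at time $T$'' to ``$\tau(Q)\le T$ and some neighbor has $\tau(Q')>T$'' is the right reading of the algorithm when $M=\infty$.

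The gap is in stage three, exactly at the ``cliff'' issue you defer as routine: it is not bookkeeping, it is where your program and the stated formula part ways. In the fast-lane regime the column top $g(x)=T-2|x|$ drops by exactly $2$ at \emph{every} column, so by your own criterion the vertex one step below the top is active whenever its outward neighbor is dry: $\tau(k,T-2k-1)=T-1$, while $\tau(k+1,T-2k-1)=\min\{2T-4k-2,\;T+1\}>T$ for all $k<(T-2)/4$. Concretely, at $T=9$ the vertex $(1,6)$ is wetted at time $8$ and its neighbor $(2,6)$ only at time $10$ (the winning flow is precisely the one from $(1,6)$, started at time $8$ and completing at time $10$, so no mechanism of the algorithm can kill it before time $T=9$); hence $(1,6)$ is active at time $9$, yet it is absent from the asserted $A_T$, which records this second-layer phenomenon only at $k=0$ via the point $(0,T-1)$. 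So carrying out your plan does not ``match the floor constants in the statement'': it produces a strictly larger set, containing $(\pm k,\,T-2k-1)$ for $1\le k<(T-2)/4$ in addition to the listed points. To close the argument you would have to either exhibit a deactivation mechanism for these cliff vertices (there is none under the semantics you correctly adopt) or conclude that the displayed identity needs amending by these $O(T)$ extra points --- a correction that is harmless for the paper's later use of the lemma (it only enlarges the active set, and the subsequent fractal construction needs lower bounds), but which means your stage three, done honestly, refutes rather than confirms the equality as stated. This is worth flagging explicitly rather than absorbing into a case analysis on $T\bmod 4$.
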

\begin{proof} 
After $T-2k$ units of time the water can travel over the path $\gamma_k$ that consists of vertices $(0,0)$, $(0,1)$, $\dots$, $(0,T-2k)$. In additional $2k$ units of time the water travels over the path $\gamma'_k$ that consists of vertices $(0,T-2k)$, $(1,T-2k)$, $\dots$, $(k,T-2k)$.  
\begin{figure}
\centering
\begin{center}
\includegraphics[scale=0.25]{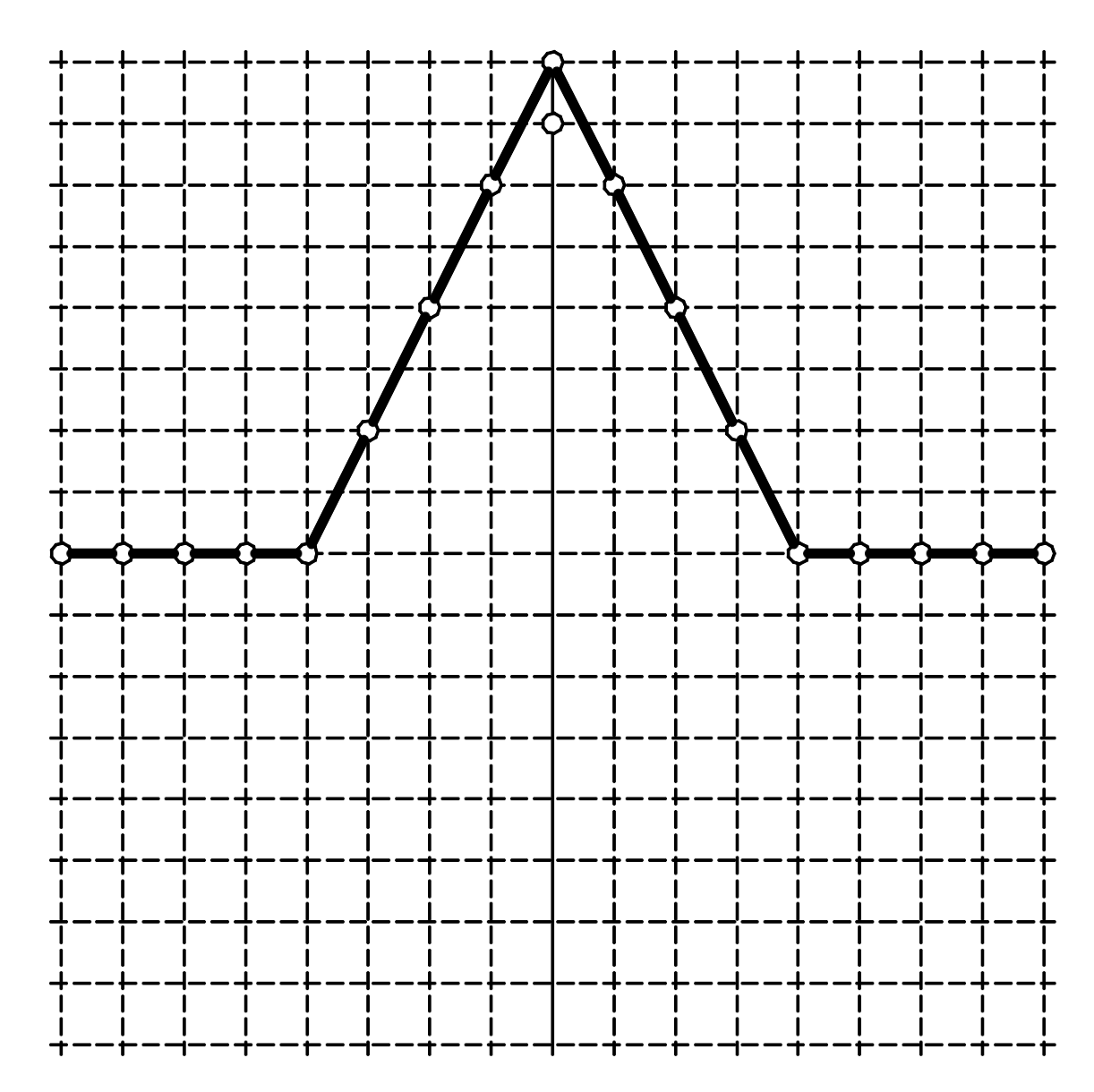}  \end{center}
\caption{\label{fi:figure 5} The active edges at time $T$.}
\end{figure} 
Consider any other path that goes from $x$ axis to the point $(k,T-2k)$ for some fixed $k\leq \left\lfloor \frac{T+1}4\right\rfloor$. If the path takes some steps over edges that belong to $y$ axis then it would have to go over at least $k$ horizontal edges to reach $y$ axis, which would take $2k$ units of time. The path would have to take at least $T-2k$ vertical edges, which would take at least $T-2k$ units of time. Thus the travel would be longer than or equal to $T$. 

However, if the path does not take steps over the edges along $y$ axis then it would have to take at least $T-2k$ steps over edges that have passage time equal to $2$. This would take $2(T-2k)=2T-4k$ units of time. If $T+1$ is not divisible by $4$, then $k<\frac{T+1}4$ and 
\begin{eqnarray*}2T-4k>2T-T-1=T-1,\end{eqnarray*} which would mean that the travel time is at least $T$. If $T+1$ is divisible by $4$ and $k= \left\lfloor \frac{T+1}4\right\rfloor$ then the vertical path would reach $(k,T-2k)$ at time $T-1$. However, the vertex $(k,T-2k)$ would still be active because the water would not reach $(k+1,T-2k)$ which is a neighbor of $(k,T-2k)$. 
\end{proof}

Let us denote by $N_t$ the number of active vertices at time $t$ whose $x$ coordinate is between $-t$ and $t$,
\begin{eqnarray*}N_t=\left\{ (x,y)\in \{-t, -t+1, \dots, t-1, t\}\times \mathbb Z_0^+: (x,y) \mbox{ is active at time } t\right\}.\end{eqnarray*}

\begin{theorem}  \label{lower_bound_construction}
There exist real numbers $\alpha$ and $t\geq 0$ and an environment $\omega$ for which \begin{eqnarray*}N_t(\omega)\geq \alpha t\log t.\end{eqnarray*}
\end{theorem}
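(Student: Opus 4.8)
My plan is to reformulate $N_t$ as the size of a first-passage shell and then engineer an environment whose shell is self-similar across $\Theta(\log t)$ scales. First I would record the structural facts that hold for every environment with edge-times in $\{1,2\}$ and source the $x$-axis. Since each edge costs between $1$ and $2$, the straight-up path gives $y\le\tau(x,y)\le 2y$, so every active vertex lies in the band $t/2\le y\le t$ of height $t/2$ inside the window $|x|\le t$; moreover $\tau$ is $2$-Lipschitz along the lattice. A vertex is active at time $t$ essentially when $t-2<\tau\le t$, so the active set is, up to a bounded factor, the shell $S_t=\{v: t-2<\tau(v)\le t\}$. If the wetted region $B_t$ were monotone in $y$ (each column wet in a single interval) its boundary would be a Lipschitz graph of horizontal extent $2t$, giving $N_t=O(t)$ and matching the tent of the preceding lemma. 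To reach $t\log t$ the wetted region must therefore be strongly non-monotone, with order $\log t$ unwetted pockets piercing a typical column.

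The construction I would build is self-similar, taking the lemma's single-column tent as the base motif: its two slope-$2$ diagonals already contribute a boundary of order $t$. I would then introduce, at geometric scales $s_j=2^{-j}t$ for $j=0,1,\dots,\lfloor\log_2 t\rfloor$, a Cantor-like family of pocket gadgets, with roughly $2^j$ gadgets of size $\Theta(s_j)$ at scale $j$, positioned so the total boundary they create at that scale is $\Theta(2^j\cdot s_j)=\Theta(t)$. Summing the contributions of the $\Theta(\log t)$ scales then yields a shell of size $\Theta(t\log t)$ inside the window.

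The elementary gadget must manufacture a horizontal unwetted pocket while the region immediately above it is already wet, so that the columns crossing it acquire two extra boundary points. I would realize this with a slow (time-$2$) slab capped by a fast (time-$1$) horizontal ceiling fed from below by fast vertical studs: water races up the studs and along the ceiling and begins descending into the slab from above, while the slab also fills slowly from below, so that at time $t$ a thin unwetted sliver survives in the interior. Verifying the intended values of $\tau$ on each gadget is done exactly as in the lemma's proof: because the speed ratio is bounded, geodesics are nearly straight and only a few candidate paths compete (straight up a stud, along the ceiling, up through the slab), and their times can be compared by hand. The bounded speed ratio also confines the influence of each gadget to a neighborhood of its own size, which is what allows gadgets at different positions and scales to be analyzed independently.

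The main obstacle is twofold, and both parts come from the rigidity noted in the first step. First, producing an honest overhang with times only in $\{1,2\}$ is delicate: the $2$-Lipschitz bound forbids cliffs and forces each pocket to be wet from above strictly faster than it fills from below, a tight timing constraint that is exactly where the explicit geodesic comparison has to be pushed through. Second, every gadget at every scale must lie in its active window at the single time $t$; choosing the heights and stud lengths of the $\Theta(\log t)$ nested families so that all their shells coincide at one $t$, rather than maturing at staggered times, is the crux of the argument. Once $\omega$ is fixed it prescribes $O(t^2)$ edge-times, so it occurs with probability at least $\min(p,1-p)^{O(t^2)}=e^{-\mu t^2}$, which is the input needed for Theorem \ref{lower_bound}.
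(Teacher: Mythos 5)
Your multiscale bookkeeping---$\Theta(\log t)$ geometric scales $s_j = 2^{-j}t$, with $2^j$ gadgets of size $\Theta(s_j)$ each contributing $\Theta(s_j)$ to the shell, hence $\Theta(t)$ per scale---is exactly the accounting used in the paper, and your observation that a $y$-monotone wetted region would force $N_t = O(t)$ correctly identifies why non-monotonicity is needed. But the proposal stops where the proof has to start. You never actually construct the pocket gadget (a dry sliver surviving at the single time $t$ between a fast ceiling and a slow fill from below, with times only in $\{1,2\}$), and you explicitly defer the simultaneous-maturity problem---arranging all $\Theta(\log t)$ generations so that every pocket is mid-fill at the same instant $t$---calling it ``the crux'' without resolving it. Worse, the locality claim you invoke to decouple the gadgets is false: the influence of a time-$1$ structure of diameter $s$ is \emph{not} confined to an $O(s)$-neighborhood. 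Saving $\Theta(s)$ units of passage time at one point advances $\tau$ by up to $\Theta(s)$ at every vertex in the cone above it, so a scale-$0$ gadget shifts the entire front seen by all finer gadgets; the placements at different scales cannot be analyzed independently but must be coordinated globally, and that coordination is precisely the hard content of the theorem.

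The paper sidesteps dry pockets altogether and uses wet fingers instead: fast (time-$1$) edges along the $y$-axis, along the segment $LO$, and then recursively along the cevians $L_0X$ and $T_0X$ inside each triangle of $\mathcal I_j$, with all other edges keeping time $2$. The active set at time $t$ is then a self-intersecting polygonal front (e.g.\ $LL'XL''YT''XT'T$, which passes through $X$ twice), and Lemma \ref{lemma_geometry} computes exactly, by elementary Euclidean geometry (the cyclic quadrilateral $LL_0L''X$ and the centroid $Y$), that each generation lengthens the front by $\frac{4}{3\sqrt 5}t$, i.e.\ adds $\frac{4}{15}t$ active vertices. Self-similarity of the halved triangles guarantees that all generations are alive at the single time $t$ with no separate timing analysis, yielding $N_t(\omega_k)\geq k\cdot\frac{4}{15}t$ and the explicit constant $\alpha=\frac{4}{15\log 2}$; fingers, unlike pockets, never need to ``survive''---the front simply folds along them. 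Your closing estimate $\min(p,1-p)^{O(t^2)}=e^{-\mu t^2}$ does match the paper's proof of Theorem \ref{lower_bound}, but for the present theorem the proposal remains a plan whose two central steps---gadget feasibility and cross-scale synchronization---are missing.
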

\begin{proof} Assume that $t=2^k$ for some $k\in \mathbb N$. Let  us define the following points with their coordinates $T=(0,t)$, $L=\left(-\frac{t}2,0\right)$, and $O=\left(0,\frac t2\right)$. 
We will recursively construct the sequence of 
pairs $\left(\omega_1,\mathcal I_1\right)$, $\left(\omega_2,\mathcal I_2\right)$,  $\dots$, $\left(\omega_k,\mathcal I_k\right)$ where $\omega_j$ is an assignment of passage times to the edges and $\mathcal I_j$ is a subgraph of $\mathbb Z^2$. This subgraph will be modified recursively. All edges in $\mathcal I_j$ have passage times equal to $2$ in the assignment $\omega_j$.  Having defined the pair $\left(\omega_j,\mathcal I_j\right)$ we will improve passage times over some edges in the set $\mathcal I_j$ by changing them from $2$ to $1$. This way we will obtain a new environment $\omega_{j+1}$ and we will define a new set $\mathcal I_{j+1}$ to be a subset of $\mathcal I_j$. The new environment   $\omega_{j+1}$ will satisfy \begin{eqnarray*}N_t(\omega_{j+1})\geq N_t(\omega_j)+\beta t,\end{eqnarray*} for some $\beta>0$.

Let us first construct the pair $\left(\omega_1,\mathcal I_1\right)$. We will only construct the configuration to the left of the $y$ axis and then reflect it across the $y$ axis to obtain the remaining configuration. 

All edges on the $y$ axis have the passage times equal to $1$, and all edges on the segment $LO$ have the passage times equal to $1$. All other edges have the passage times equal to $2$. Define $\mathcal I_1=\triangle LOT$. 
Then the polygonal line $LYT$ contains the active vertices whose $x$ coordinate is between $-t$ and $0$.

The environment $\omega_2$ is constructed in the following way. Let us denote by $L_0$ and $T_0$  the midpoints of $LO$ and $TO$. Let $X$ be the midpoint of $LT$. We change all vertices on $L_0X$ and $T_0X$ to have the passage time equal to $1$. We define $\mathcal I_2= \triangle LL_0X\cup \triangle XT_0T$.   

Let $L_1$ and $L_2$ be the midpoints of $LL_0$ and $L_0O$ and let $L'$ and $L''$ be the intersections of $XL_1$ and $XL_2$ with $LY$. The points $T'$ and $T''$ are defined in an analogous way: first $T_1$ and $T_2$ are defined to be the midpoints of $TT_0$ and $OT_0$ and $T'$ and $T''$ are the intersections of $XT_1$ and $XT_2$ with $TY$. 

The polygonal line $LL'XL''YT''XT'T$ is the set of active edges that are inside the triangle $LOT$. The following lemma will allow us to calculate $N_t\left(\omega_2\right)-N_t\left(\omega_1\right)$.

\begin{figure}
\centering
\begin{center}
\includegraphics[scale=0.25]{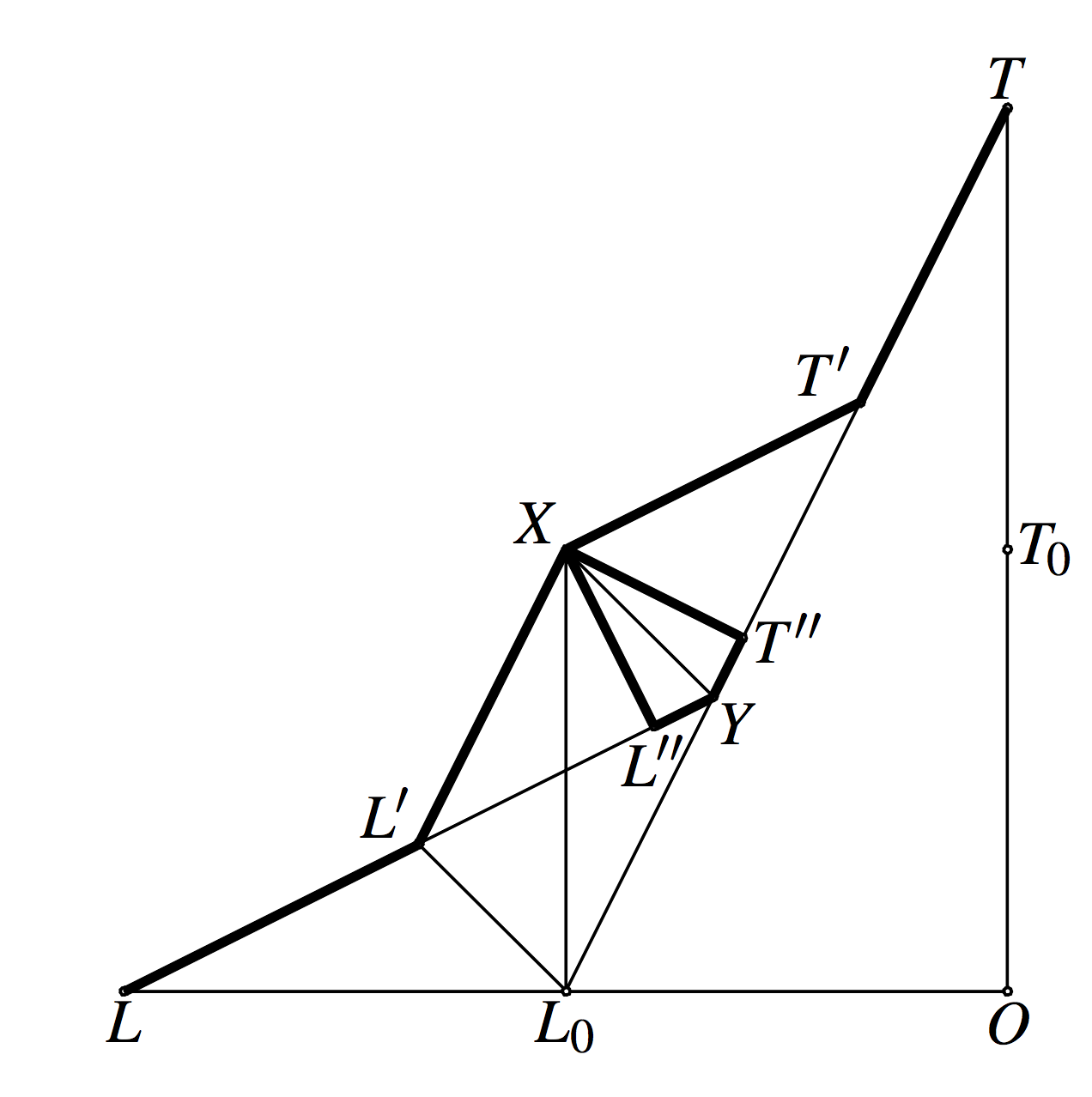}  
\end{center}
\caption{\label{fi:figure 6} The set of active edges in configuration $\omega_2$.}
\end{figure}
\begin{lemma}  \label{lemma_geometry}
Let $\Lambda$ and $\lambda$ denote the lengths of the polygonal lines
$LL'XL''YT''XT'T$ and $LYT$ respectively. If $t$ is the length of $OT$ then 
\begin{eqnarray*}\Lambda = \lambda+ \frac{4}{3\sqrt 5} t.\end{eqnarray*}
\end{lemma}

\begin{proof} It suffices to prove that
$LL'+L'X+XL''+L''Y = LY+ \frac{2}{3\sqrt 5}t$. 
From the similarities $\triangle LL_0X\sim\triangle LOT$ and $\triangle LL_0L'\sim LOX$ we have that $L_0L'\| OX$. Therefore $L'$ is the midpoint of $LY$ and $LY=LL'+L'Y=LL'+L'X$. It remains to prove that
$XL''+L''Y=\frac{2}{3\sqrt 5}t$. 
From \begin{eqnarray*}\angle L_0XL''=\angle L'XL_0=\angle L_0LL''\end{eqnarray*} we conclude that the quadrilateral $LL_0L''X$ is inscribed in a circle. The segment $LX$ is a diameter of the circle hence
$\angle LL''X=\angle LL_0X=90^{\circ}$. 
%\begin{eqnarray*}\angle LL''X=\angle LL_0X=90^{\circ}.\end{eqnarray*}  
We also have $\angle L''XY=\angle L_0XY-\angle L_0XL''=45^{\circ}-\angle OLT_0=
45^{\circ}-\mbox{arctan}\frac12$.
  
The point $Y$ is the centroid of $\triangle LOT$ hence $XY=\frac13 XO=\frac1{3\sqrt 2}t$.
Therefore \begin{eqnarray*}XL''+L''Y&=&XY\cos\left(45^{\circ}-\mbox{arctan}\frac12\right)+XY\sin\left(45^{\circ}-\mbox{arctan}\frac12\right)\\&=&
\frac{\cos\left(45^{\circ}-\mbox{arctan}\frac12\right)+\sin\left(45^{\circ}-\mbox{arctan}\frac12\right)}{3\sqrt 2}t\\
&=&
\frac{\cos\left(45^{\circ}-\mbox{arctan}\frac12\right)\cos 45^{\circ}+\sin\left(45^{\circ}-\mbox{arctan}\frac12\right)
\sin 45^{\circ}}{3}t\\
&=&\frac{\cos\left(45^{\circ}-\mbox{arctan}\frac12-45^{\circ}\right)}{3}t
=\frac{\cos\left( \mbox{arctan}\frac12\right)}{3}t\\&=&\frac{2}{3\sqrt 5}t.
\end{eqnarray*}
\end{proof}
The number of edges on each of the segments of the polygonal lines we obtained is equal to $\frac{u}{\sqrt 5}$, where $u$ is the length of the segment. Using this fact with the previous lemma applied to  both $\triangle LOT$ and its reflection along $OT$ gives us 
\begin{eqnarray*}N_t\left(\omega_2\right)-N_t\left(\omega_1\right)=\frac{4}{3\sqrt 5}t\cdot \frac1{\sqrt5}=\frac{4}{15}t.\end{eqnarray*}
\begin{figure}
\centering
\begin{center}
\includegraphics[scale=0.25]{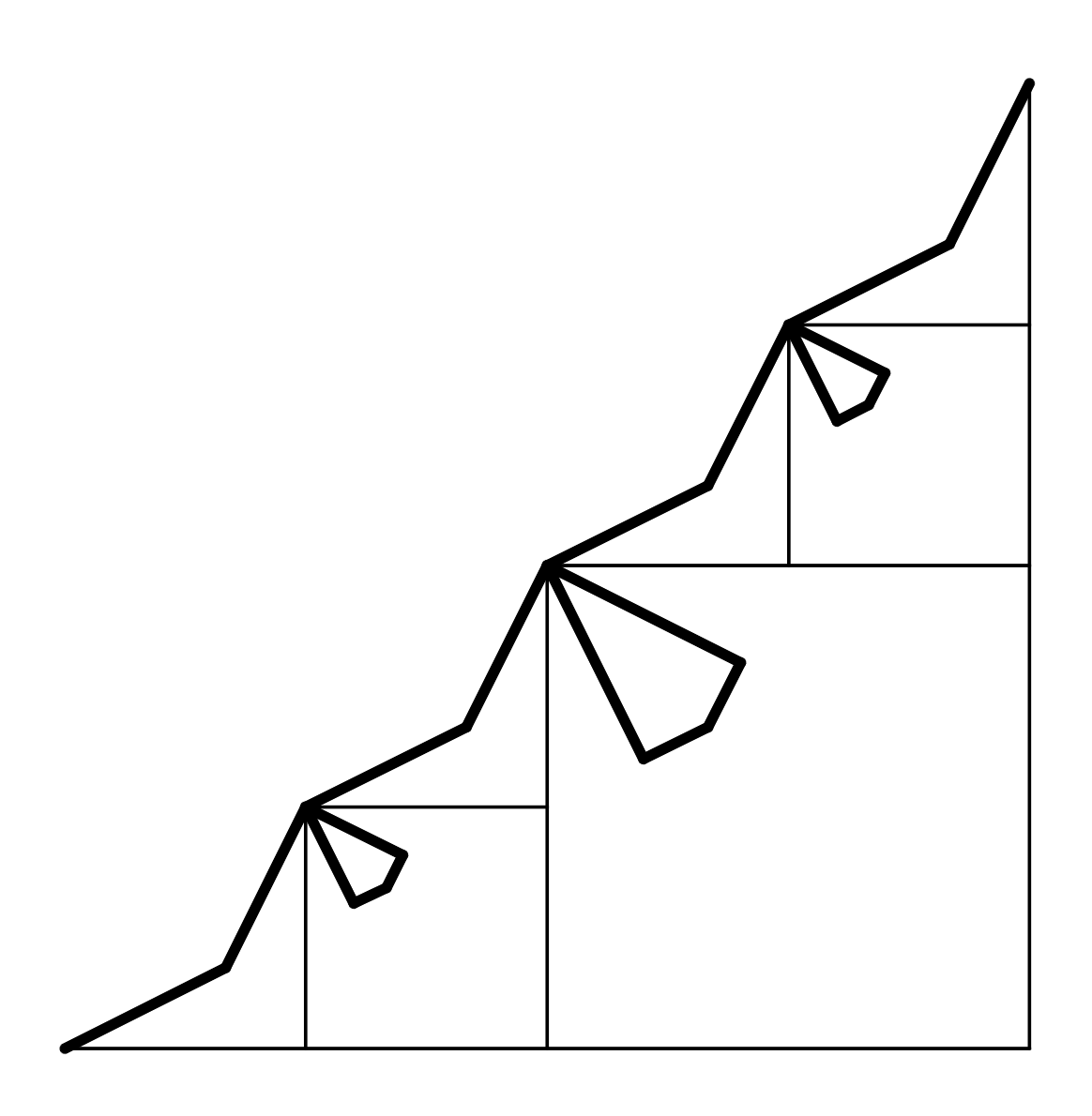}   \end{center}
\caption{\label{fi:figure 7} The set of active edges in configuration $\omega_3$.}
\end{figure} 
We now continue in the same way and in each of the triangles $LL_0X$ and $XT_0T$ we perform the same operation to obtain $\omega_3$ and $\mathcal I_3$. Since the side length of $LL_0X$ is $\frac t2$, the increase in the number of elements in the new set of active vertices is $\frac{2}{15}\cdot \frac t2$. However, this number has to be now multiplied by $4$ because there are $4$ triangles to which the lemma is applied: $\triangle LL_0X$, $\triangle XT_0T$, and the reflections of these two triangles with respect to $OT$. Therefore the increase in the number of active vertices is $N_t\left(\omega_3\right)-N_t\left(\omega_2\right)=4\cdot \frac{2}{15}\cdot \frac t2=\frac{4}{15}t$.

This operation can be repeated $k$ times and we finally get that \begin{eqnarray*}N_t\left(\omega_k\right)=N_t\left(\omega_1\right)+(k-1)\cdot 
\frac{4}{15}t\geq k\cdot \frac{4}{15}t.\end{eqnarray*} Thus the theorem holds if we set $\alpha= \frac{4}{15\log 2}$.
\end{proof}

\begin{proof}[Proof of Theorem \ref{lower_bound}] Recall that $p$ is the probability that the time $1$ is assigned to each edge. Let $\rho=\min \left\{p,1-p\right\}$. The configuration provided in the proof of Theorem  \ref{lower_bound_construction} has its probability greater than or equal to $\rho^{t^2}$. 
Therefore 
\begin{eqnarray*}P\left(A_t\geq \alpha t\log t\right)\geq \rho^{t^2}=e^{t^2\ln \rho}.\end{eqnarray*}
Therefore we may take $\mu=-\ln \rho$. 

\end{proof}

\section{Performance analysis}
\noindent The algorithm was implemented in C++ and OpenCL. The hardware used has a quad core Intel i5 processor with clock speed of 3.5GHz and AMD Radeon R9 M290X graphic card with 2 gigabytes of memory. The graphic card has 2816 processing elements.

The table  provides a comparison of the performance of the algorithm on $4$ samples of three dimensional cubes with edges of lengths $50$, $75$, $100$, and $125$. The initial configuration for  each of the graphs assumes that there is water on the boundary of the cube, while the set $B$ is defined to be the center of the cube. The same program was executed on graphic card and on CPU. 
\vspace{0.3cm}
\begin{center}
\begin{tabular}{|l|l|l|}\hline
Graph & GPU time (s)& CPU time (s)\\ \hline
$50\times 50\times 50$ & 3& 10\\ \hline
$75\times 75\times 75$ & 8& 61\\ \hline
$100\times 100\times 100$ & 21& 275\\ \hline
$125\times 125\times 125$ & 117& 1540\\ \hline
\end{tabular}
\end{center} 
\vspace{0.3cm}

The graph that corresponds to the cube $100\times 100\times 100$ has $1000000$ vertices and $2970000$ edges, while the graph corresponding to the cube $125\times 125\times 125$ has $1953125$ vertices and $5812500$ edges. 

\section{Conclusion} 
The algorithm described in this chapter solves the constrained shortest path problem using parallel computing. It is suitable to implement on graphic cards and CPUs that have large number of processing elements. 
The algorithm is implemented in C++ and OpenCL and the parallelization improves the speed tenfold. 

The main idea is to follow the percolation of water through the graph and assign different qualities to drops that travel over different edges. Each step of the algorithm corresponds to a unit of time. It suffices to analyze only those vertices and edges through which the water flows. We call them active vertices and active edges. Therefore, the performance of the algorithm is tied to the sizes of these active sets. 

Theorem \ref{lower_bound} proves that it is possible to have at time $t$ an active set of size $O(t\log t)$. The proof of the theorem relied on constructing one such set. It is an open problem to find the average size of the active set at time $t$.

\begin{problem} If the weights and travel times of the edges are chosen independently at random, what is the average size of the active set at time $t$?
\end{problem}

 At some stages of the execution, the program needs additional memory to store phantom edges in the graph. It would be interesting to know how many phantom edges are allocated during a typical execution. 
This can be formally phrased  as an open problem.
\begin{problem}
If the weights and travel times of the edges are chosen independently at random, what is the average number of phantoms that need to be created during the execution of the algorithm?
\end{problem}

\section*{Acknowledgements} The author was supported by PSC-CUNY grants  $\#68387­-00 46$, $\#69723­-00 47$ and  Eugene M. Lang Foundation.

%%%%%%%%%%%%%%%%%%%%%%%% referenc.tex %%%%%%%%%%%%%%%%%%%%%%%%%%%%%%
% sample references
% %
% Use this file as a template for your own input.
%
%%%%%%%%%%%%%%%%%%%%%%%% Springer-Verlag %%%%%%%%%%%%%%%%%%%%%%%%%%
%
% BibTeX users please use
% \bibliographystyle{}
% \bibliography{}
%
\biblstarthook{ 
}

\end{document}